\theoremstyle{plain}
        \newtheorem{theorem}{Theorem}[section]
        \newtheorem{lemma}[theorem]{Lemma}
        \newtheorem{prop}[theorem]{Proposition}
\theoremstyle{definition}
        \newtheorem{definition}[theorem]{Definition}
        \newtheorem{remark}[theorem]{Remark}
        \newtheorem{example}[theorem]{Example}
\numberwithin{equation}{section}
\def\reminder #1 {{\sf #1}}
\def\hide #1 {}
\long\def\longhide #1 {}
\newcommand{\diam}  {\operatorname{diam}}
\newcommand{\R}{\mathbb{R}}     
\newcommand{\Hyp}{\mathbb{H}} 
\newcommand{\Haus}{\mathscr{H}} 
\newcommand{\Sph}{\mathbb{S}}      
\newcommand{\RP}{\mathbb{RP}}      
\newcommand{\C}{\mathbb{C}}      
\newcommand{\N}{\mathbb{N}}
\newcommand{\dm}{\mathrm{d}}   
\newcommand{\Diff}{\mathrm{d}}   
\newcommand{\norm}[1]{\left|#1\right|}
\newcommand{\vspan}{\operatorname{span}}
\newcommand{\lm}{\varnothing}
\newcommand{\ii}{\mathbbm{i}}
\newcommand{\Mob}{\operatorname{M\ddot{o}b}}
\newcommand{\id}{\operatorname{Id}}
\renewcommand{\Re}{\operatorname{Re}}
\renewcommand{\Im}{\operatorname{Im}}
\newcommand{\twovector}[2]{
\left[\begin{array}{c}
#1\\#2	
\end{array}\right]}
\begin{document}

\title{Projection theorems for \\[3pt] linear-fractional families of projections}

\author{Annina Iseli and Anton Lukyanenko}

\address {Department of Mathematics, University of Fribourg, 1700 Fribourg,  Switzerland.}
\address {Department of Mathematics, George Mason University,   Fairfax, VA 22030}

\email{annina.iseli@unifr.ch}
\email{alukyane@gmu.edu}

\maketitle

\begin{abstract}
Marstrand's theorem states that applying a generic rotation to a planar set $A$ before projecting it orthogonally to the $x$-axis almost surely gives an image with the maximal possible dimension $\min(1, \dim A)$. We first prove, using the transversality theory of Peres-Schlag locally, that the same result holds when applying a generic complex linear-fractional transformation in $PSL(2,\C)$ or a generic real linear-fractional transformation in $PGL(3,\R)$. We next show that, under some necessary technical assumptions, transversality locally holds for restricted families of projections corresponding to one-dimensional subgroups of $PSL(2,\C)$ or $PGL(3,\R)$. Third, we demonstrate, in any dimension, local transversality and resulting projection statements for the families of closest-point projections to totally-geodesic subspaces of hyperbolic and spherical geometries.
\end{abstract}

\vspace{6pt}

\section{Introduction}
\label{sec:introduction}

\setcounter{equation}{0}

Research on projection theorems in various spaces has a long-standing tradition in geometric measure theory. Perhaps the earliest work in the field is due to Besicovich \cite{Besic1939} and Federer \cite{Federer1947}, who characterized rectifiable sets in $\R^n$ in terms of the Hausdorff measure of their image under orthogonal linear projections.
Inspired by their work, Marstrand \cite{Marstrand1954} initiated a more extensive analysis of the effect of orthogonal linear projections on the Hausdorff measure and dimension of Borel sets, showing that the image of a planar set $A$ under almost every orthogonal projection has the maximal possible Hausdorff dimension. More precisely: let $A$ be any Borel set, and consider, for each $\theta\in [0,2\pi)$ the orthogonal projection $P_\theta(A):\R^2\rightarrow L_\theta$, where $L_\theta$ is a line at angle $\theta$ to the $x$-axis. Since each $P_\theta$ is Lipschitz into a $1$-dimensional target space, $\dim(P_\theta A) \leq \min\{1, \dim A\}$. Marstrand's Theorem states that the upper bound is, in fact, attained for $\Haus^1$-almost every angle $\theta$, where $\Haus^s$ denotes the Hausdorff $s$-measure.

Marstrand's planar result was subsequently generalized, in various respects, to orthogonal projections of $\R^n$ onto $m$-planes by Kaufman \cite{Kaufman1968}, Mattila \cite{Mattila1975}, and Falconer \cite{Falconer1982}. More recently, these results have been generalized and extended to further spaces with natural projection families, including the families of horizontal and vertical projections in the Heisenberg groups \cite{BFMT2012,BDCFMT2013,Hovila2014,Harris2020}, and certain families of closest-point projections in hyperbolic $n$-space $\Hyp^n$ and the $2$-sphere $\Sph^2$ \cite{BaloghIseli2016,BaloghIseli2018}.

It is often straightforward to extend projection theorems from a small projection family to a larger one that contains it, see e.g.~Proposition \ref{prop:enlargedFamily}. Conversely, restricting projection theorems to a measure-zero subfamily is not always possible (e.g.~ the family of orthogonal projections from $\R^3$ to lines \emph{in the XY plane} always maps the Z axis to a single point). Järvenpää et al.~\cite{JJLL2008} introduced the notion of \emph{restricted families of projections} and provided conditions  under which projection theorems hold for a one-dimensional family of projections that is induced by a curve in the Grassmanian $G(n,m)$ of $m$-planes in $\R^n$. Identifying more general conditions under which a restricted family retains projection theorems remains an actively-studied task \cite{OrpVen, FassOrp2014,Chen2018,Harris_Arx2021}. 

Marstrand's original proof of his theorem used mainly methods from planar geometry. The generalizations by Kaufman and Mattila employed potential theory. Some decades later, a further developed version of such potential theoretic methods allowed Peres-Schlag \cite{PS2000} to establish a powerful result that links dimension preservation for families of mappings with a more-easily verified \emph{(differentiable) transversality condition} (see \S \ref{subsec:PrelimTransversality}). Establishing (local) transversality has become a standard method for proving projection theorems \cite{BaloghIseli2016,Hovila2014,Mattila2019}.  Theorem \ref{thm:PS_n} below summarizes the consequences of local transversality in our setting. Furthermore, its control of the distortion of Sobolev dimension of measures under the given family has been applied e.g.\ in the study of  distance set problems \cite[\S 5]{Mattila2004} and \cite[\S 8]{PS2000}, intersections of Cantor sets \cite{PeresSimonSolomyak2003}, and Bernoulli convolutions \cite[\S 5]{PS2000}. 

Combining the above themes, we will be interested in using transversality to prove projection theorems for certain one-dimensional families of projections in a non-Euclidean setting. Namely, we will replace the rotationally-symmetric projections in Marstrand's Theorem with projection families which arise from linear-fractional symmetries of the Riemann sphere $\hat \C$ and the real projective plane $\RP^2$, and characterize the regions on which transversality and projection theorems hold. Additionally, our use of projective geometry will allow us to quickly obtain transversality and projection results in spherical and hyperbolic spaces, extending the results of \cite{BaloghIseli2016,BaloghIseli2018}, see also \cite{dufloux2018linear}.

\subsection{Projection families induced by group actions}

Marstrand's theorem fixes a set $A\subset \R^2$ and varies the mapping $P_\theta$. Equivalently, one can first rotate $A$ by a rotation $R_\theta\in O(2)$ and then project it to $\R$ by the fixed mapping $\pi(x,y)=x$. This phrasing emphasizes the role of the rotation family $O(2)$. We ask whether projection theory extends to cases where $O(2)$ is replaced with another group action that arises naturally in a geometric setting.

We consider the following general framework:

\begin{definition}
Let $N,M$ be smooth manifolds, $S_0\subset N$ a closed subset, $\pi: N \rightarrow M$ a mapping (called a \emph{projection}) with domain $N\setminus S_0$, and $G$ a group acting on $N$ by $(g,p)\mapsto g(p)$. Then the \emph{projection family $\Pi$ induced by $\pi$ and the action of $G$ on $N$} is given by
$$\Pi: G\times N \rightarrow M, \ \  (g,p)\mapsto \pi(g(p))$$
on its domain $(G\times N)\setminus S$, where $S=\{(g, p): g(p)\in S_0\}$.
We refer to a family $\Pi$ arising in this manner as \emph{induced by a group action}. Such families are characterized by the condition $\Pi(g, p)=\Pi(\id, g(p))$.
\end{definition}

We will be predominantly interested in the case where $N$ is $\R^2$ or its more symmetric compactifications $\hat \C$ and $\RP^2$. We will analyze several classes of projection families induced by group actions by establishing local transversality and, where it fails, using more direct arguments that allow us to draw the same conclusions concerning dimension preservation. These are stated in Theorem \ref{thm:PS_n} and include the analogs of all classical results about orthogonal projections in $\R^n$ such as the Marstrand and Besicovich-Federer projection theorems.

\begin{definition}
\label{defi:satisfiesProjectionTheorems}
We say that a projection family $\Pi$ \emph{satisfies projection theorems} if the conclusions of Theorem \ref{thm:PS_n} hold on the domain of $\Pi$.
\end{definition}

More generally, it is natural to ask under what conditions a family $\Pi$ induced by the action of $G$ on $M$ and  a mapping $\pi: N\rightarrow M$ is (locally) transversal or satisfies projection theorems, when one equips $N, M$ with Riemannian metrics and volume measures, and $G$ with its Haar measure.
Clearly, if a fiber is $G$-invariant then projection theorems must fail along this fiber; and it seems intuitively plausible that projection theorems should hold elsewhere. However, as we show below, proving this intuition is not always straightforward even in specific examples, and in fact, the intuition fails when applied to transversality.

\subsection{Results}
In this paper, we focus on planar projection theory, extended to two separate spaces: $\hat \C=\C\cup\{\infty\}$ with the group of complex linear-fractional mappings (M\"obius transformations), or the projective plane $\RP^2$ with the group of real linear-fractional mappings (projective transformations).  Additionally, the projective geometry perspective allows us to quickly analyze closest-point projections in hyperbolic and spherical geometries.

We first study complex linear fractional transformations, denoted by $\Mob$, acting on the Riemann sphere $\hat \C=\C\cup\{\infty\}$, which is a natural family of motions to consider from the viewpoint conformal geometry and complex analysis. M\"obius transformations have the form
\[z \mapsto \frac{a z+b}{cz+d}, \text{ where $a,b,c,d\in \C$ with }ad-bc=1,\]
and can be identified with the group of complex determinant-one matrices $SL(2,\C)$. Using a decomposition of $\Mob$ into $O(2)$ and a complementary manifold\footnote{As a manifold, $\Mob$ globally decomposes as  $\Mob=O(3)\times \R\times\R^2$ due to the Iwasawa decomposition, which is then diffeomorphic to $\Mob=O(2)\times S^2\times \R^3$.}, we establish:
\begin{theorem}
\label{thm:MobAllProjections}
The family $\Pi: \Mob\times \hat \C\rightarrow \R$ given by $\Pi(g, z)=\Re(g(z))$ is locally transversal and therefore satisfies projection theorems on its domain $\Mob\times \hat \C\setminus \{(g, g^{-1}(\infty)): g\in \Mob\}$.
\end{theorem}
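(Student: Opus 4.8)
The plan is to verify the local (order-zero) transversality condition for $\Pi$, after which Theorem~\ref{thm:PS_n} yields the projection theorems. The key idea is to isolate the rotation subgroup of $\Mob$ and reduce, locally, to Marstrand's original computation. Using the manifold decomposition $\Mob = O(2)\times S^2 \times \R^3$ recorded in the footnote, I would write each $g$ near a given $g_0$ as a product $g = R_\theta \circ h$ with the rotation $R_\theta\colon z\mapsto e^{i\theta}z$ appearing as the \emph{left} factor and $h$ ranging over the complementary manifold $S^2\times\R^3$. The equivariance $\Pi(g,z)=\Pi(\id,g(z))$ then gives the clean formula $\Pi(R_\theta\circ h, z) = \Re\bigl(e^{i\theta}h(z)\bigr)$, so that varying $\theta$ while holding $h$ fixed is exactly the planar rotation family of Marstrand's theorem applied to the points $h(z)$. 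Since the $\theta$-derivative is one component of the full parameter gradient, a lower bound on $|\partial_\theta\Phi|$ immediately bounds the full $\|\nabla_g\Phi\|$ from below, and it therefore suffices to differentiate in $\theta$ alone.

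Fixing the chordal metric $\rho$ on $\hat\C$ and forming the transversality quantity $\Phi_g(z,w) = \bigl(\Pi(g,z)-\Pi(g,w)\bigr)/\rho(z,w)$, the formula above yields
\[
\Phi_{R_\theta\circ h}(z,w) = R\cos(\theta+\phi), \qquad R = \frac{|h(z)-h(w)|}{\rho(z,w)}, \quad \phi = \arg\bigl(h(z)-h(w)\bigr),
\]
and hence the Pythagorean identity $\Phi^2 + (\partial_\theta\Phi)^2 = R^2$ at the heart of Marstrand's argument: whenever $R$ is bounded below by a positive constant $c$, the bound $|\Phi|\le c/2$ forces $|\cos(\theta+\phi)|\le 1/2$, so that $|\partial_\theta\Phi| = R\,|\sin(\theta+\phi)|\ge c\sqrt3/2$, which is precisely order-zero transversality with constant comparable to $c$.

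The crux is thus a two-sided control of the distortion ratio $R = |h(z)-h(w)|/\rho(z,w)$. This cannot hold globally: a Möbius map may contract arbitrarily, driving $R\to 0$, while $R\to\infty$ exactly as $h(z)$ or $h(w)$ tends to $\infty$, that is, as $(g,z)$ approaches the excluded fiber $\{(g,g^{-1}(\infty))\}$ — which is the structural reason transversality is only \emph{local}. On a neighborhood of a point $(g_0,z_0)$ of the domain, however, one may fix a compact set of maps $h$ and a compact set of base points, over which $R$ is continuous, extends across the diagonal $z=w$ to the strictly positive conformal factor $|h'(z)|(1+|z|^2)/2$, and therefore attains a positive minimum and a finite maximum. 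This produces the local transversality constant, while the joint smoothness of $\Pi$ away from the excluded fiber supplies the regularity bounds required to invoke Theorem~\ref{thm:PS_n}.

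It remains to pass from local to global. Covering $\Mob$ and $\hat\C$ minus the excluded fiber by countably many such neighborhoods and applying Theorem~\ref{thm:PS_n} on each, one combines the conclusions using the countable stability of Hausdorff dimension — noting also that deleting the single point $g^{-1}(\infty)$ from a set leaves its dimension unchanged — to obtain the projection theorems for $\Pi$ on all of $\Mob\times\hat\C\setminus\{(g,g^{-1}(\infty))\}$. I expect the main obstacle to be precisely the distortion estimate for $R$: establishing uniform two-sided bounds on each local neighborhood and verifying that its degeneration occurs exactly along the excluded fiber, since this is simultaneously where global transversality fails and where the domain restriction originates.
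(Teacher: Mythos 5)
Your proposal is correct and follows essentially the same route as the paper: the paper proves Theorem \ref{thm:MobAllProjections} by combining the well-known transversality of the Marstrand rotation family $O(2)$ with Proposition \ref{prop:enlargedFamily}, whose proof is exactly your observation that in local product coordinates $\Mob \cong O(2)\times(S^2\times\R^3)$ the full parameter gradient of $\Phi$ dominates the $\theta$-partial derivative. Your write-up simply unrolls the two cited ingredients --- re-deriving Marstrand's Pythagorean identity and the compactness bound on the distortion ratio $R$ explicitly, and working near an arbitrary $g_0$ rather than reducing to the identity via Lemma \ref{lemma:thetaZero} --- but the underlying mechanism is identical.
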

We ask what restricted families in $\Pi: \Mob\times \hat \C\rightarrow \R$ satisfy projection theorems. For some symmetric families the answer is well-known: the family corresponding to the rotations $O(2)$ satisfies projection theorems, while families corresponding to dilations or translations are non-transversal since they simply rearrange the fibres of the projection. For other families, such as $(\theta, z) \mapsto \Re\left( \frac{\cos(\theta) z - \sin (\theta)}{\sin(\theta)z + \cos(\theta)}\right)$, the answer is not 
obvious.  We prove:

\begin{theorem}
\label{thm:MobSpecialProjections}
Let $\Gamma\subset \Mob$ be a one-dimensional Lie subgroup, and $\Pi: \Gamma\times \hat \C\rightarrow \R$ the family given by $\Pi(\gamma,z)=\Re(\gamma(z))$. Then $\Pi$ satisfies projection theorems on its domain, with the following natural exceptions:
\begin{enumerate}[label={(\arabic*)\ }, itemsep=2pt,topsep=1pt]
    \item If $\Gamma$ consists of Euclidean dilations and translations, then projection theorems fail globally.
    \item If the orbit $\Gamma(\infty)$ is a vertical line, then projection theorems fail along this line.
\end{enumerate}
\end{theorem}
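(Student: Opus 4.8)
The plan is to reduce everything to the infinitesimal generator of $\Gamma$ and to a single separation function. Write the one-parameter subgroup as $\Gamma=\{\gamma_s=\exp(sX)\}$ with generator $X=\bigl(\begin{smallmatrix}\alpha&\beta\\\gamma&-\alpha\end{smallmatrix}\bigr)\in\mathfrak{sl}(2,\C)$, so that the flow is generated by the holomorphic vector field $V(z)=\beta+2\alpha z-\gamma z^2$. Since $\Pi(\gamma_s,z)=\Pi(\id,\gamma_s(z))$, the cocycle identity lets me replace a pair of points at parameter $s$ by their images at parameter $0$: it suffices to analyze, for all $u\neq u'$, the separation $D(s)=\Re(\gamma_s(u)-\gamma_s(u'))$ and its $s$-derivatives at $s=0$. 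The conformal distortion of $\gamma_s$ only rescales the normalizing distance $|u-u'|$ by a factor bounded above and below on compact ranges of $s$ and away from the poles, which is precisely why the statement is local. The computational engine is the identity $\gamma_s(u)-\gamma_s(u')=\delta/P(s)$, where $\delta=u-u'$, $\sigma=u+u'$, and $P(s)=(c_s u+d_s)(c_s u'+d_s)$ is built from the lower row $(c_s,d_s)$ of $\gamma_s$; note that $P$ is an exponential polynomial in $s$ with exponents $0,\pm 2\mu$ for $\mu=\sqrt{\alpha^2+\beta\gamma}$.

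Next I would dispose of the two exceptional families by direct collapse arguments. For (1) I would first identify a subgroup of Euclidean dilations and translations as precisely the case $\gamma=0$ with $\alpha\in\R$ (including $\alpha=0$): then $V$ is affine with real linear part, $V(u)-V(u')=2\alpha\,\delta$, and hence $D'(s)=2\alpha\,D(s)$, so $D(s)=e^{2\alpha s}D(0)$. The separation never changes sign; the flow permutes the vertical fibres of $\Re$ and merely rescales the target, so projection theorems fail globally. For (2), if $\Gamma(\infty)$ is a vertical line $V_0$, then $V_0$ is a $\Gamma$-invariant circle on which $\Re$ is constant, whence $\Pi(\gamma_s,\cdot)$ collapses $V_0$ to a point for every $s$, killing dimension along $V_0$.

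For every remaining $\Gamma$ the plan is to establish local transversality and then invoke Theorem~\ref{thm:PS_n}. First-order transversality is governed by $D'(0)=\Re(\delta(2\alpha-\gamma\sigma))$: for $\delta$ nearly vertical (the regime where $D(0)$ is small) one has $|D'(0)|/|\delta|\to|\Im(2\alpha-\gamma\sigma)|$, which is bounded below away from the base-point line $L_0=\{u_0:\Im(2\alpha-2\gamma u_0)=0\}$. When $\gamma\in i\R\setminus\{0\}$ this $L_0$ is vertical and coincides with the excluded line of case~(2); the case $\gamma=0,\ \alpha\notin\R$ (rotations and spiral similarities) is directly transversal with the uniform bound $|D'(0)|/|\delta|=2|\Im\alpha|$; and the remaining flows, with $\gamma\neq 0$ and $\gamma\notin i\R$ (generic hyperbolic, elliptic, and loxodromic), have non-vertical $L_0$ that must be recovered. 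On $L_0$ I would pass to higher-order transversality through the derivatives $D^{(k)}(0)=\Re\big(\delta\,R_k(\sigma,\delta)\big)$, exploiting the structural dichotomy that $D(s)=\Re(\delta/P(s))$ is real-analytic and vanishes identically only when the exponential-polynomial curve $s\mapsto P(s)$ stays on a line through the origin—which forces either an open family of collapsed pairs (case~(1)) or a single invariant vertical line (case~(2)). Off these exceptional sets $D$ has finite vanishing order, and the relevant derivative supplies the transversality demanded by Theorem~\ref{thm:PS_n}.

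The hard part will be making this higher-order transversality uniform: one needs lower bounds on $|D^{(k)}(s)|$ that are uniform over all pairs in a compact region and all $s$ in a compact interval, with a uniformly bounded vanishing order $k$. This forces a case analysis according to whether $\mu$ is real, imaginary, or genuinely complex, since the exponential-polynomial structure of $P(s)$—and hence the location and order of the zeros of $D$—behaves differently in each regime. Combined with the non-compactness near the poles and at $\infty$, which is what necessitates localizing in both parameter and space and then covering by countably many pieces, this uniform nondegeneracy estimate is the real obstacle; the classification and the collapse arguments for (1) and (2) are comparatively routine.
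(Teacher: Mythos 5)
Your setup and first-order analysis coincide with the paper's: linearizing at the identity gives $\dm_t\vert_{t=0}\Pi(t,z)=\Re(\beta+2\alpha z-\gamma z^2)$, transversality holds with a uniform constant wherever $\Im(\alpha-\gamma z)\neq 0$, and your collapse arguments for exception (1) and for a genuinely invariant vertical line match the paper's. The gaps are in how you treat the non-transversal line $L_0$.

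First, a classification error. You assert that $\gamma=a_{21}\in\ii\R\setminus\{0\}$ (i.e.\ $L_0$ vertical) ``coincides with the excluded line of case (2).'' It does not: $L_0$ is only the \emph{linearization} of the orbit $\Gamma(\infty)$ at $t=0$, while exception (2) requires the orbit itself to be a vertical line. Take the loxodromic generator $X=\bigl(\begin{smallmatrix}\mu&0\\ \ii&-\mu\end{smallmatrix}\bigr)$ with $\mu=a+b\ii$, $ab\neq 0$: then $L_0=\{\Re z=b\}$ is vertical, but $\Gamma(\infty)$ is a spiral, not a line, so no exception applies and the theorem asserts projection theorems hold on the \emph{whole} domain, including along $L_0$. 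Your proposal asserts the opposite conclusion for this case and offers no argument; the paper handles it explicitly (its ``artifact case,'' and its loxodromic example): since orbits are analytic, $K_0$ is not an orbit, so any small-diameter subset of $L_0$ is moved off $K_0$ by some $\gamma_{t_0}$, and the group-invariance $\Pi(\gamma,z)=\Pi(\gamma\gamma_{t_0}^{-1},\gamma_{t_0}(z))$ transfers the projection theorems, valid in the transversal region, back to the original set.

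Second, a tool mismatch. For the remaining non-vertical $L_0$ you propose higher-order transversality: ``$D$ has finite vanishing order, and the relevant derivative supplies the transversality demanded by Theorem \ref{thm:PS_n}.'' But Theorem \ref{thm:PS_n} demands the first-derivative condition of Definition \ref{defin_transversality_Euclidean}, and the paper \emph{proves} that this condition genuinely fails along $L_0$: for $w_0\in L_0$ and the vertical pair $(w_0,w_0+\ii\Delta y)$ the derivative ratio is at most $|a_{21}||\Delta y|\to 0$, so no localization restores it, and a lower bound on $D^{(k)}(0)$ for $k\geq 2$ does not verify the hypothesis. (Peres--Schlag's degree-$\beta$ transversality could absorb higher-order vanishing, but it yields weaker conclusions and is not the theorem stated or invoked here.) So the difficulty is not merely the uniformity you flag at the end; as a route into Theorem \ref{thm:PS_n}, the higher-order program cannot work. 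The paper instead recovers projection theorems on $L_0$ by elementary geometry: when $L_0=\Gamma(\infty)$ is invariant and non-vertical, the projection restricted to $L_0$ is a similarity, so dimension and measure statements hold for every $\gamma$ outright; in all other cases one is in the artifact case above. Your proof needs to be restructured around arguments of this kind rather than around transversality on $L_0$.
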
 
While the projection theorems confirm the expected behavior, the underlying transversality result has an artefact that we work around: transversality fails along a set corresponding to the linearization of $\Gamma(\infty)$, see Theorem \ref{thm:MobTransversality}.

\begin{figure}
    \centering
    \hfill{}
    \includegraphics[width=.35\textwidth]{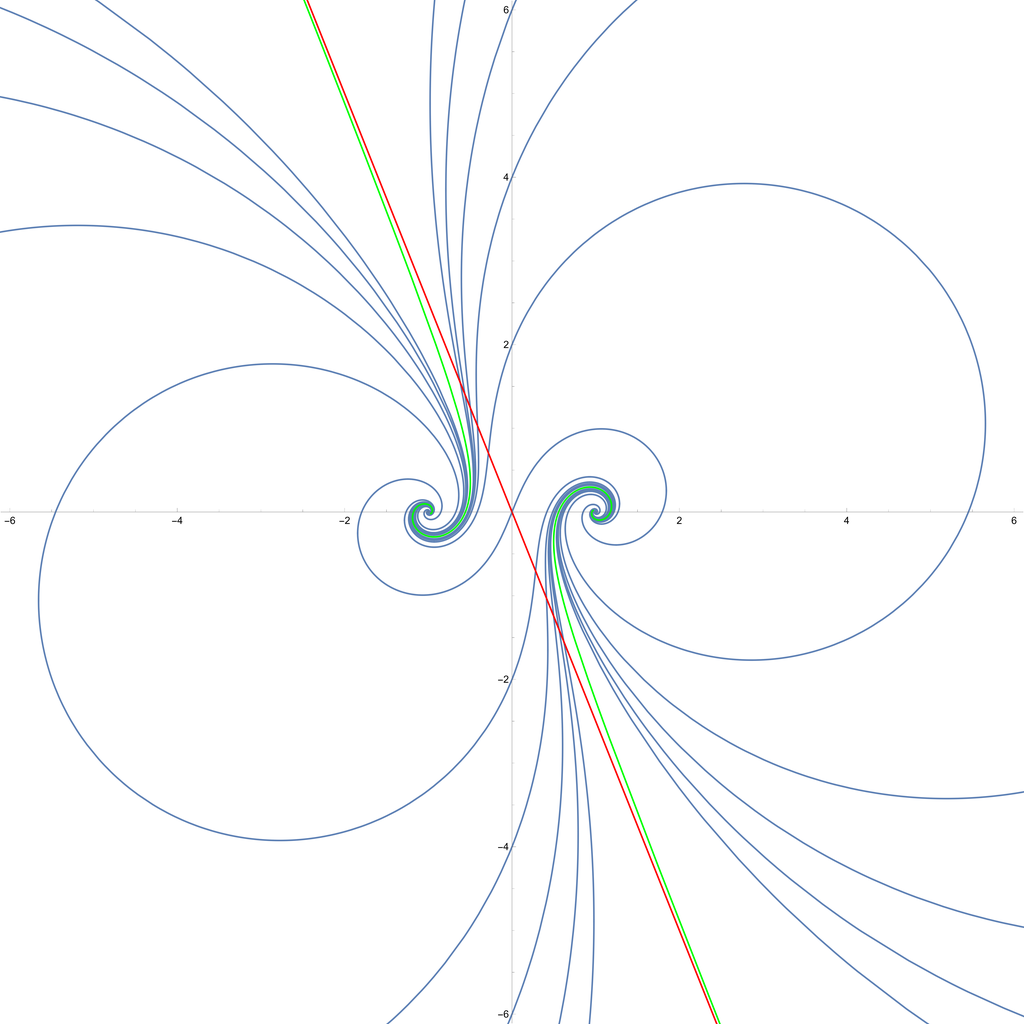}
    \hfill{}
    \includegraphics[width=.35\textwidth]{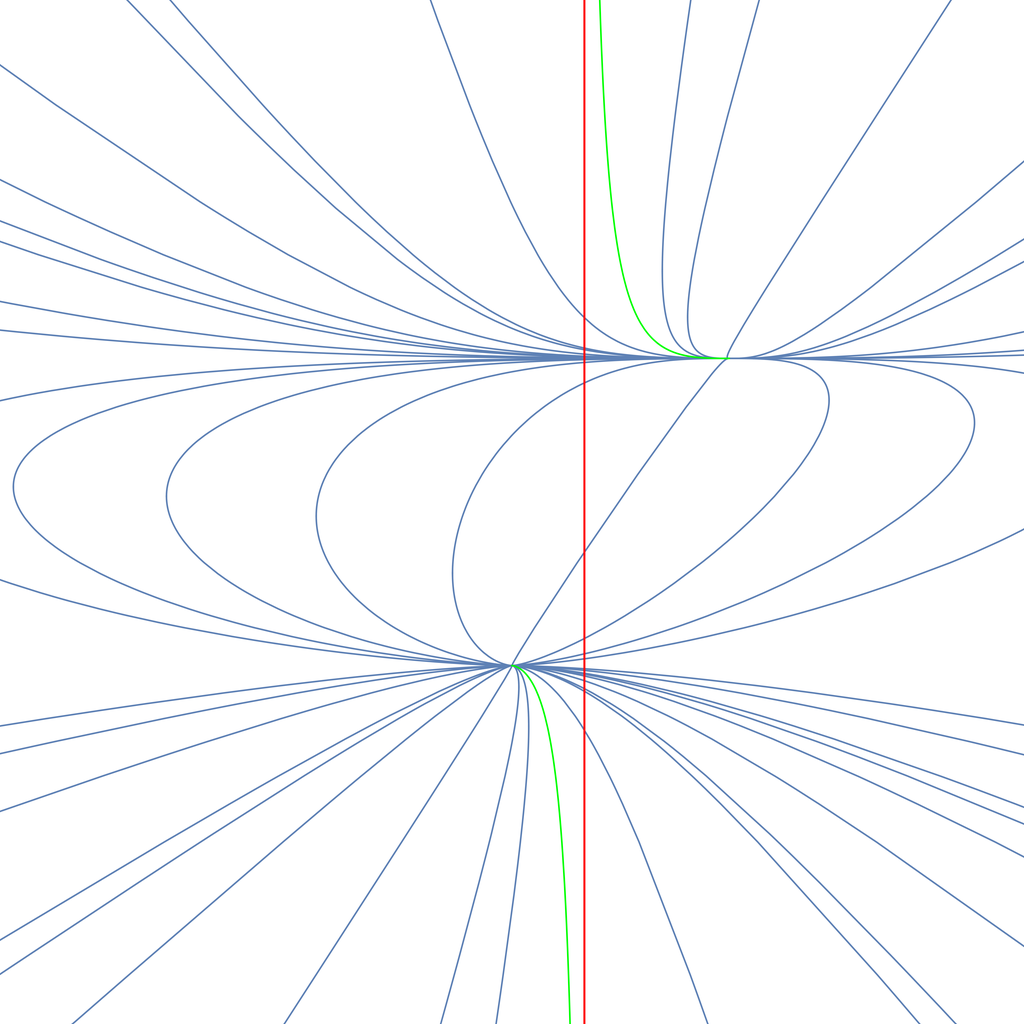}
    \hfill{}
    \caption{Groups $\Gamma$ in $\Mob$, left picture, and in $GL(3,\R)$, right picture, illustrated via blue orbits. Projection theorems hold on the full domain of the associated family $\Pi$, but transversality fails at $\gamma=\id$ along the red line $L$ tangent to the green orbit $\Gamma(\infty)$ (resp., $\Gamma(\infty_Y)$), and more generally on the set $\{(\gamma, p):\gamma^{-1} p\in L\}\subset \Gamma\times \hat C$ (resp., $\Gamma \times \RP^2$).
    }
    \label{fig:infiniteTangent}
\end{figure}

We next consider real linear fractional (projective) transformations $PGL(3,\R)$ acting on the real projective plane $\RP^2$, which is the natural family of motions to consider from the viewpoint of projective geometry. The projective plane $\RP^2$ is a compactification of $\R^2$ that distinguishes points at infinity corresponding to linear directions. In $\RP^2$, the projection $\pi:(x,y)\mapsto (x,0)$ can be interpreted as a linear point-source projection from the infinite point $\infty_Y$ in the $Y$ direction, and naturally extends to a mapping $\pi: \RP^2 \rightarrow \RP^1 = \R \cup\{\infty_X\}$. Projective transformations have the form
\[(x,y)\mapsto \left(\frac{a_{11}x+a_{12}y+a_{13}}{a_{31}x+a_{32}y+a_{33}},\frac{a_{21}x+a_{22}y+a_{23}}{a_{31}x+a_{32}y+a_{33}}\right),\text{ where }(a_{i,j})\in GL(3,\R).
\]

We prove analogs of Theorems \ref{thm:MobAllProjections} and \ref{thm:MobSpecialProjections}:

\begin{theorem}
\label{thm:RPAllProjections}
The family $\Pi: GL(3,\R) \times \RP^2 \rightarrow \RP^1$ given by $\Pi(g, z)=\pi(g(z))$  on its domain  $ (GL(3,\R) \times \RP^2 )\setminus \{(g, g^{-1}(\infty_Y)): g\in GL(3,\R)\}$ is locally transversal and therefore satisfies projection theorems.

\end{theorem}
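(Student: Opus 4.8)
The plan is to verify the local transversality condition of \S\ref{subsec:PrelimTransversality} and to conclude via Theorem \ref{thm:PS_n}. Since $\Pi$ is induced by a group action, I would first use the identity $\Pi(hg,p)=\pi\big(h(g(p))\big)=\Pi(h,g(p))$ to push every instance of the transversality inequality to the identity element. Concretely, fixing a point of the domain together with a compact neighborhood $W$ of it, the Peres--Schlag profile and its derivative in the group directions at a parameter $g\in GL(3,\R)$ can be rewritten, by differentiating $h\mapsto\Pi(h,g(p))$ at $h=\id$, in terms of the infinitesimal $\mathfrak{gl}(3,\R)$-action evaluated at the transported points $g(p),g(q)$; the normalizing source distances $d(p,q)$ and $d(g(p),g(q))$ differ only by the locally bounded bi-Lipschitz constants of $g$. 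As $(g,p,q)$ ranges over $W$ the transported points $g(p),g(q)$ stay in a compact set $K\subset\RP^2\setminus\{\infty_Y\}$, so it suffices to prove the infinitesimal transversality inequality at $\id$, uniformly over pairs of distinct points in $K$.

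Next I would make the infinitesimal action explicit. In an affine chart $(x,y)$ on $\RP^2$ the vector field generated by $M=(m_{ij})\in\mathfrak{gl}(3,\R)$ has first component
$$\dot x=(m_{11}x+m_{12}y+m_{13})-x(m_{31}x+m_{32}y+m_{33})=m_{13}+(m_{11}-m_{33})x+m_{12}y-m_{31}x^2-m_{32}xy,$$
and, since $\pi$ records the $x$-coordinate, $d\pi(\hat M)=\dot x$, which ranges over $\operatorname{span}\{1,x,y,x^2,xy\}$ as $M$ varies. The crucial observation is that the shear $M=E_{12}$ gives $d\pi(\hat M)=y$, the exact complement to the projection $\pi$.

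With distinct points $P=(x_1,y_1)$, $Q=(x_2,y_2)$ of $K$ expressed in a suitable affine chart, the transversality profile and its derivative along $E_{12}$ are, up to the uniformly bounded factors relating the affine, $\RP^2$-, and $\RP^1$-metrics,
$$\Phi(P,Q)=\frac{\pi(P)-\pi(Q)}{d(P,Q)}\asymp\frac{x_1-x_2}{|P-Q|},\qquad\partial_{E_{12}}\Phi(P,Q)\asymp\frac{y_1-y_2}{|P-Q|}.$$
Writing $P-Q=|P-Q|\,(\cos\alpha,\sin\alpha)$, these two quantities are comparable to $\cos\alpha$ and $\sin\alpha$, so they cannot be small simultaneously: whenever $|\Phi|$ lies below a fixed threshold, $|\partial_{E_{12}}\Phi|$ is bounded below, which is precisely the transversality inequality. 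This identity is exact for all pairs in the chart, so the bound is uniform over $K$, and smoothness of all the data on $K$ supplies the higher-order regularity required by Theorem \ref{thm:PS_n}. Pairs whose transported points lie on the line at infinity (necessarily away from $\infty_Y$), or which project near $\infty_X\in\RP^1$, are treated identically after passing to the relevant affine charts of $\RP^2$ and $\RP^1$, the fiber-separating direction then being the appropriate conjugate of $E_{12}$.

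I expect the principal obstacle to be organizational rather than conceptual: one must arrange the finitely many chart changes on the compact set $\RP^2\setminus\{\infty_Y\}$ and on $\RP^1$ so that a single transversality constant persists as the transported points approach the line at infinity, and one must check that the conditional form of the inequality (it is demanded only where $|\Phi|$ is small) interacts correctly with the chart on the compact circle $\RP^1$. This is the real-projective analogue of the structure behind Theorem \ref{thm:MobAllProjections}, where a decomposition of $\Mob$ into $O(2)$ and a complementary manifold plays the same packaging role; here the polar decomposition $GL(3,\R)=O(3)\cdot(\text{positive-definite symmetric matrices})$ can organize the reduction to $\id$, but the essential input is simply the exact complementarity between the shear $E_{12}$ and the point-source projection $\pi$.
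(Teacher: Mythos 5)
Your proposal is correct, and I checked the two computations it rests on; but it takes a more hands-on route than the paper, whose proof never differentiates anything. The paper cites the classical transversality of the $O(2)$-induced (Marstrand) family on $\R^2$ and enlarges it to $GL(3,\R)$ via Proposition \ref{prop:enlargedFamily}, which covers the region where the transported point is finite; to reach the line at infinity it conjugates $O(2)$ by the fixed matrix $M=\left(\begin{smallmatrix}0&0&-1\\0&1&0\\1&0&0\end{smallmatrix}\right)$, which fixes $\infty_Y$ and hence intertwines $\pi$ with a diffeomorphism of $\RP^1$, so that Lemma \ref{lemma:transversalitycomposition} plus Proposition \ref{prop:enlargedFamily} give transversality wherever the transported point avoids the closed $Y$-axis $\hat Y = M(\RP^2\setminus\R^2)$; the two regions union to the claimed domain. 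Your argument has the same skeleton --- reduction to the identity via the group law (your first paragraph is exactly the paper's Lemma \ref{lemma:thetaZero}), one subfamily for the finite chart, a conjugate subfamily at infinity --- but your inputs are direct: the shear $E_{12}$ with the exact identity $\Phi^2+(\partial_{E_{12}}\Phi)^2=1$ on $\R^2$, and its conjugate $E_{32}$, for which indeed, in the coordinates $(u,v)=(Y/X,Z/X)$ where $\pi$ becomes $(u,v)\mapsto v$, one computes $\dot v = u$ and gets the same identity; since the chart $\{X\neq 0\}$ meets the line at infinity exactly in its complement of $\infty_Y$, your two charts cover the domain, and one large partial derivative already bounds the maximum in \eqref{eq:trans_m1}, so you never need Proposition \ref{prop:enlargedFamily} explicitly. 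What your route buys is self-containedness (no appeal to Marstrand) and an explicit constant uniform over each entire chart, which makes the uniformity worry in your last paragraph moot --- local transversality only asks for a covering by neighborhoods with possibly varying constants; in effect you have reproved the special cases $A=E_{12}$, $A=E_{32}$ of the paper's later Theorem \ref{thm:RPTransversality}. What the paper's route buys is brevity and reuse of its general lemmas. One immaterial slip: $\RP^2\setminus\{\infty_Y\}$ is not compact; what you need, and have, is that transported pairs coming from a compact neighborhood in the domain stay in a compact subset of it.
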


\begin{theorem}
\label{thm:RPSpecialProjections}
Let $\Gamma\subset GL(3,\R)$ be a one-dimensional Lie subgroup, and let $\Pi: \Gamma\times  \RP^2 \rightarrow \RP^1$ the family given by $\Pi(\gamma,p)=\pi(\gamma(p))$. Then $\Pi$ satisfies projection theorems on its domain  $ (GL(3,\R) \times \RP^2 )\setminus \{(g, g^{-1}(\infty_Y)): g\in GL(3,\R)\}$, with the following natural exceptions:
\begin{enumerate}[label={(\arabic*)\ }, itemsep=2pt,topsep=1pt]
    \item If $\Gamma$ consists of mappings of the form $(x,y)\mapsto \left(\frac{a_{11}x+a_{13}}{a_{31}x+a_{33}},\frac{a_{21}x+a_{22}y+a_{23}}{a_{31}x+a_{32}y+a_{33}}\right)$, or, equivalently, preserves the source $\infty_Y$ of the projection $\pi$, then projection theorems fail globally,
    \item If the orbit $\Gamma(\infty_Y)$ is a vertical line or the line at infinity, then projection theorems fail along this line.
\end{enumerate}
\end{theorem}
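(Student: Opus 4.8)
The plan is to mirror the argument for the Möbius case (Theorem \ref{thm:MobSpecialProjections}): establish a transversality statement for the restricted family that is valid away from a lower-dimensional artefact, convert it into projection theorems via Theorem \ref{thm:PS_n} on the transversal region, and treat the two exceptional families by direct degeneracy arguments. The added cost over the complex case is purely bookkeeping: $\mathfrak{sl}(3,\R)$ carries more Jordan types than $\mathfrak{sl}(2,\C)$, so the case list is longer.

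First I would reduce to the infinitesimal generator. A connected one-dimensional Lie subgroup is $\gamma_t = \exp(tX)$ for some $X \in \mathfrak{gl}(3,\R)$; since scalar matrices act trivially on $\RP^2$ I may take $X \in \mathfrak{sl}(3,\R)$, and since conjugating $\Gamma$ by an element $h \in \mathrm{Stab}(\infty_Y)$ changes $\Pi$ only by pre-composition with a diffeomorphism of $\RP^2$ and post-composition with a projective self-map of $\RP^1$ (both bi-Lipschitz on compacta away from their poles, hence dimension-preserving), I may normalize $X$ under the adjoint action of $\mathrm{Stab}(\infty_Y)$. Running through the resulting normal forms, grouped by Jordan type, produces a short list of model generators. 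In this language the exceptions are read off directly: case (1) is exactly $X e_2 \in \R e_2$, i.e.\ $\gamma_t(\infty_Y) = \infty_Y$ for all $t$, so every fiber of $\pi$ is merely permuted and the image dimension cannot grow, exactly as in the orthogonal $\R^3 \to \text{XY-lines}$ example of the introduction; and case (2) is exactly when the orbit $\Gamma(\infty_Y)$ lies in a single fiber of $\pi$ (a vertical line or the line at infinity), forcing the degeneracy along that line.

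For the remaining model generators I would verify transversality. Using the defining property $\Pi(\gamma_t, p) = \Pi(\id, \gamma_t(p))$, the $t$-derivatives of $\Pi$ are the images under $d\pi$ of the flow velocities of the vector field induced by $X$, so the Peres-Schlag transversality condition for a one-parameter family into the one-dimensional target $\RP^1$ reduces to a non-degeneracy (Wronskian-type) computation in $X$ and the fibration $\pi$. This computation is the bulk of the work but is routine given the normal forms; as in Theorem \ref{thm:MobTransversality} it succeeds everywhere except along the linearization $L$ of the orbit $\Gamma(\infty_Y)$ at $\gamma = \id$, and by the group symmetry the full failure locus is $\{(\gamma,p) : \gamma^{-1}p \in L\}$, meeting each parameter slice in the single line $\gamma_t(L)$. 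On the complement, Theorem \ref{thm:PS_n} yields the projection theorems.

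The main obstacle is disposing of this artefact, and here I would localize using the group law: to prove the conclusion for a.e.\ $t$ it suffices to prove it for a.e.\ $s$ near $0$ after replacing $A$ by $A' = \gamma_{t_0}(A)$, since $\pi(\gamma_{t_0+s} A) = \pi(\gamma_s A')$. For each $\epsilon > 0$, transversality holds uniformly on $A' \setminus N_\epsilon(L)$ for all sufficiently small $s$, so Theorem \ref{thm:PS_n} gives $\dim \pi(\gamma_s(A' \setminus N_\epsilon(L))) = \min(1, \dim(A' \setminus N_\epsilon(L)))$ for a.e.\ such $s$. Letting $\epsilon \to 0$ along a countable sequence and invoking countable stability of Hausdorff dimension, together with the observation that deleting the single line $L$ cannot lower $\dim A'$ when $\dim A' > 1$ (and, when $A'$ is concentrated on $L$, that $\pi \circ \gamma_s|_L$ is a dimension-preserving projective map for a.e.\ $s$), recovers the full lower bound $\min(1, \dim A)$ for $\pi(\gamma_s A')$. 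Covering $\Gamma$ by countably many such charts completes the non-exceptional cases, and the explicit degeneracies established for (1) and (2) finish the proof.
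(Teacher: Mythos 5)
Your overall architecture is the same as the paper's: the paper proves exactly your transversality claim as Theorem \ref{thm:RPTransversality} (linearizing $\gamma_t=\exp(At)$ at $t=0$ to get the condition $a_{12}-a_{32}x\neq 0$, then conjugating by a fixed matrix to handle the points at infinity), identifies exception (1) with the stabilizer condition $a_{12}=a_{32}=0$ and exception (2) with the orbit lying in a fiber, and then runs the case analysis of Theorem \ref{thm:MobSpecialProjections}: Theorem \ref{thm:PS_n} off the artefact, direct arguments on it. Your Jordan-type normalization under the stabilizer of $\infty_Y$ versus the paper's computation with a general $(a_{ij})$ is cosmetic. The genuine problems are in your disposal of the artefact. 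First, the $\epsilon\to 0$ step has a quantifier gap that defeats it as written: the parameter interval $(-\delta_\epsilon,\delta_\epsilon)$ on which transversality holds uniformly off $N_\epsilon(L)$ shrinks with $\epsilon$, since a point at distance $\epsilon$ from $L$ can be carried onto $L$ in time comparable to $\epsilon$. Hence for a fixed $s\neq 0$ only finitely many of your countably many statements apply, and countable stability yields only $\dim\pi(\gamma_s A')\geq\min\bigl(1,\dim(A'\setminus N_{\epsilon(s)}(L))\bigr)$ for some $\epsilon(s)>0$ bounded away from zero, not the full bound; no fixed neighborhood of $s=0$ ever receives the asserted conclusion for a.e.\ $s$. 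This is repairable --- for instance, cover the open transversal region $\{(t,p):\gamma_t(p)\notin K_0\}$ by countably many product boxes, apply Theorem \ref{thm:PS_n} in each box, and intersect the full-measure parameter sets, which gives $\dim\pi(\gamma_t A)\geq \min\bigl(1,\dim(A\setminus\gamma_t^{-1}(K_0))\bigr)$ for a.e.\ $t$; alternatively argue at Lebesgue density points of a putative positive-measure exceptional set --- but some such device must be supplied.

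Second, your parenthetical claim that $\pi\circ\gamma_s|_L$ is a dimension-preserving projective map for a.e.\ $s$ is precisely the crux, and you give no argument for it. In the projective setting, unlike the M\"obius one, the artefact line is \emph{always} the closure of a fiber of $\pi$: every projective line through $\infty_Y$ is vertical or at infinity, and indeed Theorem \ref{thm:RPTransversality} gives $L_0=\{a_{32}x=a_{12}\}$ or the line at infinity. So at $s=0$ the map $\pi|_L$ is constant, and there is no analogue of the M\"obius ``good case'' in which $\pi|_{L_0}$ is a similarity; everything rests on showing that $\gamma_s(L)$ contains $\infty_Y$ (equivalently, that the orbit point $\gamma_{-s}(\infty_Y)$ lies on $L$) for only a null set of $s$. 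The paper obtains this from analyticity: the orbit $t\mapsto\gamma_t(\infty_Y)$ is analytic, so it meets the projective line $K_0$ in a discrete set of times unless it is entirely contained in $K_0$, and containment is exactly the excluded exception (2) (which, as your own phrasing ``lies in a single fiber'' correctly suggests, should be read as containment in a vertical line or the line at infinity, since the orbit may be a proper sub-arc). Without this analyticity input, your ``for a.e.\ $s$'' is an assertion of the theorem's conclusion for subsets of $L$ rather than a proof of it, and it is false in exception (2) --- so the argument cannot close without exhibiting where that exception is used.
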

As for Theorem \ref{thm:MobSpecialProjections}, the behavior in Theorem \ref{thm:RPSpecialProjections} is expected, but transversality fails along the linearization of the orbit $\Gamma(\infty_Y)$, see Theorem \ref{thm:RPTransversality}. Importantly, this non-transversality locus may be the line at infinity $\RP^2\setminus \R^2$. In Example \ref{ex:point-source}, we consider a group $\Gamma\subset GL(3,\R)$ that is conjugate to $O(2)$ by a mapping that sends $\pi$ to the point-light-source projection mapping, and the non-transversality at infinity for the family associated to $\Gamma$ turns into non-transversality along a line in $\R^2$ that is tangent to the orbit of the light source under $O(2)$.

We finish by applying projective geometry to closest-point projections in hyperbolic space $\Hyp^n$ and spherical space $\Sph^n$. In each of these spaces, we consider the families of $m$-dimensional totally-geodesic subspaces through a fixed point, and the corresponding families of closest-point projections. (Note that each of these families can be identified with the Grassmannian $G(n,m)$ via the exponential mapping.) Finding an appropriate change of coordinates based on projective geometry, we show that each of these families is, in fact, equivalent to the Euclidean orthogonal projection families, and is therefore transversal: 

\begin{theorem}
\label{thm:hypsphintro}
Let $X$ be hyperbolic space $\Hyp^n$ or the sphere $\Sph^n$, and $p$ a point in $X$. In the spherical case, let $S^{n-1}\subset \Sph^n$ denote the great sphere perpendicular to $p$. Let $\Pi: G(n,m)\times X\rightarrow \R^m$ be the family of nearest-point projections onto $m$-dimensional totally geodesic subspaces of $X$ through $p$.  Then $\Pi$ is locally transversal and satisfies projection theorems on all of $\Hyp^n$, respectively on $\Sph^n\setminus S^{n-1}$.
\end{theorem}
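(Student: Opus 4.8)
The plan is to reduce Theorem~\ref{thm:hypsphintro} to the Euclidean case by exhibiting, for each of $\Hyp^n$ and $\Sph^n$, an explicit projective change of coordinates under which the nearest-point projection family becomes (up to smooth reparametrization) the Euclidean orthogonal projection family, and then invoking Theorem~\ref{thm:PS_n} together with the transversality already established for orthogonal projections in $\R^n$. The key observation is that totally-geodesic subspaces of both geometries are, in the standard projective models, intersections of the model with \emph{affine subspaces} of the ambient $\R^{n+1}$ (or $\RP^n$): in the Klein model of $\Hyp^n$, geodesics and totally-geodesic $m$-planes are exactly the intersections of straight affine $m$-planes with the open unit ball, and in the spherical case totally-geodesic subspaces through $p$ are intersections of $\Sph^n$ with linear subspaces of $\R^{n+1}$ containing the line $\R p$. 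The fact that the family of totally-geodesic $m$-subspaces through a fixed point $p$ is parametrized by $G(n,m)$ via the exponential map is what makes the parameter space match that of the Euclidean family.

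First I would normalize coordinates so that $p$ is a convenient basepoint: take $p$ to be the center of the Klein ball in the hyperbolic case, and $p$ the ``north pole'' (so that $S^{n-1}$ is the equator) in the spherical case. In the hyperbolic case, the Klein model has the feature that geodesics through the center are Euclidean straight lines through the origin, and the closest-point (hyperbolic) projection onto a totally-geodesic subspace through $p$ agrees, after projection to the tangent space $T_p X \cong \R^n$ via the exponential map, with the \emph{Euclidean} orthogonal projection onto the corresponding $m$-plane. I would verify this by a direct computation: the symmetry of the hyperbolic metric under the stabilizer of $p$ (which acts as $O(n)$ on $T_pX$) forces the nearest-point projection onto a subspace through $p$ to respect the same orthogonal splitting, so that in these coordinates the map $\Pi(V, x)$ depends on $(V,x)\in G(n,m)\times \R^n$ exactly as the Euclidean orthogonal projection does, post-composed with a fixed diffeomorphism (the radial exponential reparametrization) of the target $\R^m$. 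A diffeomorphism of the target does not affect Hausdorff dimension, so projection theorems transfer verbatim.

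For the spherical case the same strategy applies via gnomonic (central) projection from the center of $\Sph^n$ onto the tangent plane at $p$: great subspheres through $p$ map to affine $m$-planes through the image of $p$, and the nearest-point spherical projection again intertwines with Euclidean orthogonal projection after this change of coordinates. This is precisely where the exclusion of the equator $S^{n-1}$ enters: the gnomonic projection is only defined on the open hemisphere $\Sph^n\setminus S^{n-1}$ (points on the equator map to infinity), which is why transversality and the projection theorems are asserted only off $S^{n-1}$. I would make the identification of the target clean by noting that $G(n,m)$ for the two families literally coincides, so no reparametrization of the \emph{parameter} space is needed — only a fixed diffeomorphism of $X$ with (a subset of) $\R^n$ and a fixed diffeomorphism of the target $\R^m$.

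The main obstacle I anticipate is not the existence of these classical coordinate changes but verifying carefully that the nearest-point projection commutes with them, i.e.\ that the closest-point projection in the curved metric really does correspond to Euclidean orthogonal projection and not merely to \emph{some} smooth family of projections onto the same subspaces. The subtle point is that ``nearest point in the hyperbolic/spherical metric'' and ``orthogonal foot of perpendicular in the Euclidean model'' need not a priori coincide, and one must use the rotational symmetry about $p$ (stabilizer acting as $O(n)$) to pin down the projection fibers. Once the intertwining is confirmed on the level of fibers, local transversality is inherited directly from the Euclidean family (Marstrand/Kaufman--Mattila transversality, a special case of the setting of Theorem~\ref{thm:PS_n}), since local transversality is a statement about the fibers of $\Pi$ and is preserved under pre- and post-composition with diffeomorphisms; the projection theorems then follow from Theorem~\ref{thm:PS_n}.
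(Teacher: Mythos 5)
Your overall strategy is the same as the paper's proof of Theorem \ref{thm:curvedTransversality}: pass to the Klein model of $\Hyp^n$ (resp.\ the gnomonic/projectivization map for $\Sph^n\setminus S^{n-1}$), show that in these coordinates the nearest-point projection family becomes the Euclidean orthogonal projection family onto $m$-planes through the origin, and conclude via Lemma \ref{lemma:transversalitycomposition}, the classical Euclidean transversality, and Theorem \ref{thm:PS_n}. You also correctly isolate the crux: one must prove that the curved-metric nearest-point projection really is Euclidean orthogonal projection in the model, not merely some smooth family with the same target subspaces. But the tool you propose for this step --- ``the rotational symmetry about $p$ (stabilizer acting as $O(n)$) to pin down the projection fibers'' --- cannot do the job, and this is a genuine gap.

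The reason is that symmetry about $p$ controls only the fiber through $p$ itself; it says nothing decisive about the fibers over points $w\neq p$ of the target subspace $W$, which is where the content lies. Concretely, the Poincar\'e ball model satisfies every hypothesis your symmetry argument invokes --- $p$ is the origin, the stabilizer of $p$ acts by Euclidean rotations $O(n)$, and the totally geodesic subspaces through $p$ are exactly the linear ones --- yet there the fiber of nearest-point projection over $w=(a,0,\dots,0)\in W$ with $a\neq 0$ is an arc of a sphere orthogonal to the boundary sphere, \emph{not} the Euclidean-perpendicular plane $\{x_1=a\}$, so nearest-point projection in the Poincar\'e model is not Euclidean orthogonal projection. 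Any argument deriving the intertwining from $O(n)$-symmetry alone would ``prove'' the same false statement there. What the paper uses instead are isometries that do \emph{not} fix $p$: hyperbolic translations along $W$, which one checks from their explicit linear-fractional formula in the Klein model map Euclidean-orthogonal fibers over $W$ to Euclidean-orthogonal fibers, and which reduce the claim to the fiber over the origin, where hyperbolic and Euclidean perpendicularity agree; in the spherical case it uses the dilation-invariance of the projectivization map together with the explicit description of spherical closest-point projection (orthogonal projection in $\R^{n+1}$ followed by rescaling to norm $1$). A secondary slip: you phrase the coordinate change as conjugation by $\exp_p$, but in normal (exponential) coordinates the nearest-point projection is not Euclidean orthogonal projection even up to post-composition with a target diffeomorphism --- in $\Hyp^2$ the projection of the point with polar coordinates $(r,\theta)$ lies at arclength $\operatorname{arctanh}(\tanh r\cos\theta)$ along $W$, which is not a function of $r\cos\theta$ alone. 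The intertwining holds in Klein coordinates, which differ from exponential coordinates by a radial diffeomorphism of the \emph{domain}; this is harmless by Lemma \ref{lemma:transversalitycomposition}, but it is not a mere reparametrization of the target.
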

See Theorem \ref{thm:curvedTransversality} for a more precise phrasing of this result.

Previously to this work, transversality was known for $\Hyp^2$ and $\Sph^2$ \cite{BaloghIseli2016}, and projection theorems \cite{BaloghIseli2018} and a weaker version of transversality \cite{AnninaPhD} were known in $\Hyp^n$ for $n\geq 3$ but not for $\Sph^n$.

\subsection{Methods}
The proofs of the above results are based on direct calculations establishing transversality, combined with appropriate geometric and Lie-theoretic considerations for $\Mob$ and $GL(3,\R)$. We establish the following auxiliary results to aid with the calculations, which are of independent interest and rely on our strong regularity and symmetry assumptions: 
\begin{itemize}[itemsep=2pt,topsep=1pt]
    \item Lemma \ref{lemma:transversalitycomposition} allows us to change coordinates and reparametrize $C^2$ projection families, in particular allowing us to speak of local transversality in coordinate charts of manifolds, as in the definition of projection families induced by group actions;
    \item Lemma \ref{lemma:thetaZero} allows us to check transversality of symmetric $C^2$ projection families $\Pi: G\times N\rightarrow \R$ only along the identity of $G$ rather than in specified neighborhoods;
    \item Proposition \ref{prop:enlargedFamily} shows that transversality persists when one enlarges the parameter space of a symmetric $C^2$ projection family, putting a special focus on identifying minimal transversal families and leads into our study of symmetric projection families with one-parameter symmetry groups.
\end{itemize}

\subsection{Future Directions}
Our results can be seen as a first step in the study of projection theory for projection families induced by group actions. There are several immediate generalizations one can ask about, by working with new projections (e.g.~point-source or ultraparallel), new groups (e.g.~quasi-conformal), and new spaces (e.g.~$\R^n$, complex hyperbolic space, or Heisenberg groups). In higher-dimensional spaces, this would require, in view of Proposition \ref{prop:enlargedFamily}, understanding \emph{minimal} transversal families. In Heisenberg groups, projection theorems generally fail for the projection family generated by isometries and either vertical or horizontal projections, but can be expected to hold for the group of M\"obius transformations.

\subsection{Structure of the paper}
The paper is structured as follows:
Section \ref{sec:prelims} is for preliminaries. In \ref{subsec:LieGrass}, we recall some basics on Lie groups and Grassmannians. This subsection can safely be skipped by readers who are familiar with the terminology of Lie groups.  
In subsections \ref{subsec:PrelimTransversality} - \ref{sec:trans_for_groups}, we recall the formal definition of transversality and its implications. Moreover, we prove preliminary results for families of of projections induced by group actions. In particular, we prove that transversality can be lifted to families of projections induced by larger Lie groups.
In Section \ref{sec:season1}, we study projection theory in the plane for projections induced by M\"obius transformations. In particular, we prove Theorems \ref{thm:MobAllProjections} and \ref{thm:MobSpecialProjections}. In Section \ref{sec:season2}, we study projection theory in the plane for projections induced by real projective transformations. In particular, we prove Theorems \ref{thm:RPAllProjections} and \ref{thm:RPSpecialProjections}. In Section \ref{sec:hypsph}, we consider closest-point projections in spherical and hyperbolic geometry, proving Theorem \ref{thm:hypsphintro}.

\section{Preliminaries}
\label{sec:prelims}

\subsection{Lie groups and Grassmannians}\label{subsec:LieGrass}
We now briefly recall the basic structure and introduce the notation for Lie groups, their quotients, and the particular case of the orthogonal group and Grassmannians. For a detailed account, see the textbooks \cite{Lee2013,Knapp2002} and \cite[Ch.~3]{Mattila1995}.

A Lie group $G$ is a smooth manifold with a group structure such that the product mapping $G\times G\rightarrow G$ and the inversion mapping $G\rightarrow G$ are $C^\infty$-mappings. As a Lie group, $G$ possesses a unique (up to rescaling) measure that is invariant under left multiplication, called the (left) Haar measure. Likewise, we have an (effectively) unique Riemannian metric on~$G$, given by choosing an inner product on the tangent space of the identity and translating it to the full tangent bundle by left translations; different choices of inner product yield bi-Lipschitz metrics. Since the Haar measure is smooth (i.e.~given in charts by integrating a smooth function), it agrees with the Hausdorff measure of the appropriate dimension, and the notion of zero-measure is compatible with Lebesgue measure when $G$ is viewed in charts.

The exponential map 
$\exp : T_{\id} G\rightarrow G$ is a smooth mapping that relates the Lie algebra structure of the tangent space at the identity with the Lie group structure of $G$. For matrix groups, the exponential map can be written explicitly using the Taylor expansion of $e^x$, interpreted appropriately for matrices. On a small neighborhood of $0$, $\exp$ is a diffeomorphism onto an open neighborhood of ${\id}\in G$. Via an identification of $T_{\id}G$ with $\R^n$, this provides exponential coordinates on $G$ near $\id$. Further composing with any element $g\in G$ provides exponential coordinates near $g$.

A Lie subgroup $\Gamma \subset G$ is a subset that is both an immersed submanifold and a subgroup of $G$. Connected one-dimensional subgroups are images of lines through the origin in $T_{\id} G$ under the exponential map.
In studying quotients of $G$ by $\Gamma$, one often imposes the condition that $\Gamma$ is closed as a subset of $G$ (non-closed examples such as the irrational line on the torus do not behave well in charts and do not provide good quotient spaces). In exponential coordinates, one locally sees $\Gamma$ as a linear subspace of $G$, concluding that the inclusion $\iota: \Gamma\hookrightarrow G$ is a smooth embedding. Translates of this subspace under the action of $G$ provide a smooth foliation, and a smooth section of this foliation provides a coordinate chart for the quotient (coset) space $G/\Gamma$. One concludes that $G/\Gamma$ is a manifold, and obtains in particular a smooth family of mappings $g\in G$ that relate each point of $G/\Gamma$ to the basepoint $[\id]\in G/\Gamma$. 

The orthogonal group $O(n)$ consists of matrices $M$ satisfying $M^\mathsf{T}M=\id$, with Lie algebra $\mathfrak{o}(n)$ consisting of skew-symmetric matrices. The standard basis elements of $\mathfrak o(n)$ are the matrices $A_{i,j}$ for $i<j$ with a 1 in the $(i,j)$ entry, a $(-1)$ in the $(j,i)$ entry, and $0$s elsewhere. Under the exponential mapping, the mapping $R_{\theta_{i,j}}^{i,j}=\exp(\theta_{i,j}A_{i,j})$ acts by rotation on $\operatorname{span}(e_i,e_j)\subset\R^n$, and by identity on the remaining basis vectors of $\R^n$.

The Grassmannian $G(n,m)$ consists of all $m$-dimensional subspaces ($m$-planes) through the origin in $\R^n$. Fixing a specific $m$-plane $V_0$ (say, spanned by the first $m$ basis vectors), one observes that mappings in $O(n)$ send $V_0$ to other elements of $G(n,m)$ and that the orbit of $V_0$ under $O(n)$ is all of $G(n,m)$. One can then identify $G(n,m)$ with the quotient space $O(n)/(O(m)\times O(n-m))$ by observing that $O(m)\times O(n-m)$ is the maximal subgroup that leaves $V_0$ in place. One can give $G(n,m)$ a manifold structure explicitly by considering the possible basic rotations near the identity. Note first that the rotations $R^{i,j}_{\theta_{i,j}}$ for $i,j\leq m$ rotate $V_0$ within itself, the rotations $R^{i,j}_{\theta_{i,j}}$ for $i,j> m$ leave $V_0$ pointwise-invariant, and each of the remaining basic rotations $R^{i,j}_{\theta_{i,j}}$ moves $V_0$ into a new direction. 

Let $W=\{(\theta_{i,j})_{i,j}\in \mathfrak{o}(n): \theta_{i,j}=0 \text{ if } i,j\leq m \text{ or } i,j>m\}\subset \mathfrak o(n)$.
The exponential map identifies a neighborhood of $W$ near the identity with a neighborhood of $V_0\in G(n,m)$, and the mapping $\exp((\theta_{i,j})_{i,j})\in O(n)$ provides a smoothly-varying identification of $\exp((\theta_{i,j})_{i,j})V_0$ with $V_0$. One may further identify $V_0$ with $\R^m$ to obtain a local smoothly-varying identification of elements of $G(n,m)$ with $\R^m$. 

We comment further that the exponential mapping on a Riemannian manifold $M$ based at a point $p\in M$ identifies the Grassmannian $G(n,m)$ with a family of smooth submanifolds of $M$ meeting at $p$, by a $C^\infty$ mapping that is a priori only locally defined. In $\mathrm{CAT}(0)$ spaces such as real hyperbolic space, the exponential mapping is in fact a diffeomorphism between the tangent space and the manifold.

\subsection{Transversality and projection theorems}\label{subsec:PrelimTransversality}

We now define transversality and state the primary consequences of transversality for projection theory, phrased in a language suitable for geometric applications. For details, see \cite[Theorem 4.9]{PS2000}, \cite[Ch.\,5]{Mattila2004} and \cite[Ch.\,5,18]{Mattila2015}.

Let $n,m,k\in \N$ with $k\geq m$. 
A ($k$-parameter) \emph{family of mappings}, is a continuous mapping $\Pi:\Lambda\times \Omega\to \R^m$ where  $\Omega\subset \R^n$ is open and $\Lambda\subset \R^k$ is open.We assume, furthermore, that $\Pi$ is $C^{L}$-smooth for some $L\geq 2$. We denote individual mappings in the family by $\Pi_\lambda (\omega)=\Pi(\lambda, \omega)$ and refer to $\Lambda$ as the {\it parameter space} of the family. As is common in the field, we will refer to the mappings $\Pi_\lambda$ as \emph{projections} and to $\Pi$ as a \emph{projection family}.

\begin{definition}\label{defin_transversality_Euclidean}
Let $\Pi:\Lambda \times \Omega\rightarrow \R^m$ be a family of mappings that is $C^{L}$-smooth for some $L\geq 2$. For  $\lambda\in \Lambda{}$ and $v\neq  w \in \Omega$,  define
\begin{equation}\label{eq:definePhi}
\Phi(\lambda,v,w)=\frac{\Pi(\lambda,v)-\Pi(\lambda,w)}{|v-w|}.
\end{equation}

The family $\Pi$ is {\it transversal} on $\Lambda\times \Omega$ if
there exists  $C>0$ such that for all $\lambda\in \Lambda$ and all $v\neq  w\in \Omega$ we have:
 \begin{equation}\label{eq:transGeneral}
 \text{ if \ } |\Phi(\lambda,v, w)|\leq C \text{ \ then \ }
 \left|\det \Diff_\lambda\Phi(\lambda,v, w)(\Diff_\lambda\Phi(\lambda,v, w))^\mathsf{T}\right|\geq C^2,
\end{equation}
where $(\Diff_\lambda\Phi(\lambda,v, w))^\mathsf{T}$ denotes the transpose of the $(m\times k)$-matrix $\Diff_\lambda\Phi(\lambda,v, w)$. 

The family $\Pi$ is {\it locally transversal} if $\Lambda\times \Omega$ can be covered by neighborhoods such that the restriction of $\Pi$ to each of these neighborhoods is transversal.
\end{definition}

In the case $m=1$, the transversality condition \eqref{eq:transGeneral} can be rewritten as:
\begin{equation}\label{eq:trans_m1}
    \text{ if \ } |\Phi(\lambda,v, w)|\leq C \text{ \ then \ }
\max_{j\in\{1,\dots,k\}}  \left| \dm_{\lambda_j}\Phi(\lambda,v, w) \right|\geq C.
\end{equation}

For $m=k=1$, it further reduces to:
\begin{equation}\label{eq:trans_m1k1}
    \text{ if \ } |\Phi(\lambda,v, w)|\leq C \text{ \ then \ }
 \left| \dm_\lambda \Phi(\lambda,v, w)\right|\geq C.
\end{equation}

The basic intuition for the relation between projection theory and the transversality condition is as follows. Projection theorems fail for a family $\Pi(\lambda, \omega)$ if big parts of $\R^n$ are collapsed by many projections in the family. The if-side of \eqref{eq:transGeneral} detects this collapse for a given value of $\lambda$. The then-side then guarantees that two points $u,v$ that were collapsed under $\Pi_\lambda$ move apart quickly if the value of $\lambda$ is varied. \eqref{eq:trans_m1k1}.\\[-6pt]

The following theorem combines \cite[Theorem 4.9]{PS2000} and \cite[Theorem 1.2]{HJJL2012}.

\begin{theorem}\label{thm:PS_n} Let $n,m,k\in\N$ with $k\geq m$ and let $L\geq 2$.
Let $\Pi: \Lambda{}\times \Omega \rightarrow \R^m$ be a locally transversal, $C^{L}$-smooth family of mappings, then for all Borel sets $A\subset \Omega$: \begin{enumerate}[label={(\arabic*)\ }, itemsep=2pt, topsep=1pt]
\item If $\dim A \leq m$, then 
\begin{enumerate}[label={(\alph*)\ },itemsep=2pt,]
\item  $\dim (\Pi_\lambda A)= \dim A$ for $\Haus^k$-a.e.\ $\lambda\in  \Lambda{}$,
\item For $0<\alpha\leq\dim A$, $\dim(\{\lambda \in  \Lambda{}\ : \ \dim(\Pi_\lambda A)<\alpha\})\leq (n-m-1)m+\alpha$.
\end{enumerate}
\item If $\dim A > m$, then 
\begin{enumerate}[label={(\alph*)\ }, itemsep=2pt,]
\item  $\Haus^m (\Pi_\lambda A)>0$ for $\Haus^k$-a.e.\ $\lambda\in  \Lambda{}$,
\item $ \dim(\{\lambda \in  \Lambda{}\ : \ \mathscr{L}^m(\Pi_\lambda A)=0\})\leq (n-m)m+m-\min\{\dim A,L-1\}$.
\end{enumerate} 
\item If $\dim A>2m$, then
\begin{enumerate}[label={(\alph*)\ },itemsep=2pt,]
\item  $\Pi_\lambda A\subset \R^m$ has non-empty interior for $\Haus^k$-a.e.\ $\lambda\in  \Lambda{}$,
\item \begin{tabbing} $\dim(\{\lambda \in  \Lambda{} :  (\Pi_\lambda A)^\circ \neq \lm \})\leq (n-m)m-(\min\{\dim A,L-1\} -2m)(1+\frac{m}{L})^{-1}$
\end{tabbing}
\end{enumerate}

\item If $\Haus^m(A)<\infty$, then $A$ is purely $m$-unrectifiable if and only if $\Haus^m(\Pi_\lambda(A))=0$ for~$\Haus^k$-a.e.\ $\lambda\in  \Lambda{}$.

\end{enumerate}
\end{theorem}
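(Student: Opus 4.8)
The plan is to deduce Theorem~\ref{thm:PS_n} from the quantitative projection results already in the literature, namely \cite[Theorem~4.9]{PS2000} for conclusions (1)--(3) and \cite[Theorem~1.2]{HJJL2012} for the Besicovitch--Federer-type conclusion (4); the only genuine work is the passage from \emph{local} to \emph{global} transversality and the verification that our hypotheses match theirs.

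First, I would confirm that a family which is transversal on all of $\Lambda\times\Omega$ in the sense of Definition~\ref{defin_transversality_Euclidean} satisfies the hypotheses under which \cite{PS2000} and \cite{HJJL2012} prove their statements. The uniform constant $C$ in \eqref{eq:transGeneral} together with $C^{L}$-smoothness supplies exactly the transversality-plus-regularity bounds those proofs require, and I would record that the exceptional-set exponents in (1b), (2b), (3b) are the ones produced there, with the dependence on $L$ entering through $\min\{\dim A, L-1\}$ and the factor $(1+\tfrac{m}{L})^{-1}$.

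Second, and this is the main point, I would run the local-to-global reduction. Since $\Pi$ is only locally transversal, cover $\Lambda\times\Omega$ by countably many open boxes $\Lambda_j\times\Omega_i$ on each of which $\Pi$ is transversal, and apply the cited results on each box. Writing $A=\bigcup_i(A\cap\Omega_i)$ and using countable stability of Hausdorff dimension, choose $i_0$ with $\dim(A\cap\Omega_{i_0})=\dim A$; applying (1a) on each $\Lambda_j\times\Omega_{i_0}$ and taking the measure-theoretic union over $j$ gives $\dim(\Pi_\lambda A)\ge\dim A$ for $\Haus^k$-a.e.\ $\lambda$, while the reverse inequality is the local Lipschitz bound, yielding (1a). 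For (1b), (2b), (3b) the global exceptional $\lambda$-set is contained in a countable union over $i,j$ of the corresponding per-box exceptional sets, each obeying the stated dimension bound, so countable stability transfers the bound to the union. For (2a), (3a) positivity of $\Haus^m(\Pi_\lambda A)$ and non-emptiness of $(\Pi_\lambda A)^\circ$ follow from the same statement for the single piece $A\cap\Omega_{i_0}$, since $\Pi_\lambda A\supseteq\Pi_\lambda(A\cap\Omega_{i_0})$. Finally (4) is assembled analogously: pure $m$-unrectifiability and its failure are inherited by and detected on the pieces $A\cap\Omega_i$, and $\Haus^m$-nullity of $\Pi_\lambda A$ is equivalent to simultaneous nullity of the $\Pi_\lambda(A\cap\Omega_i)$, so the per-box instances of \cite[Theorem~1.2]{HJJL2012} combine to the claimed equivalence.

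The step I expect to require the most care is matching the precise transversality hypotheses of \cite{PS2000,HJJL2012} to Definition~\ref{defin_transversality_Euclidean} and checking that the quantitative exceptional-set bounds survive the countable patching. The dimension bookkeeping itself is routine, but one must ensure that restricting $A$ to a chart does not lower the dimension on the piece that matters, and that in (2a), (3a), (4) the relevant measure-theoretic conclusions really are monotone, respectively additive, across the cover.
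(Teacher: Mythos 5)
Your proposal takes essentially the same approach as the paper: the paper offers no proof of Theorem \ref{thm:PS_n} at all, stating only that it ``combines'' \cite[Theorem 4.9]{PS2000} and \cite[Theorem 1.2]{HJJL2012}, with the local-to-global patching you spell out left implicit in the definition of local transversality, so your countable-cover argument is exactly the standard reduction the paper is relying on. One small repair to your write-up: the supremum $\dim A=\sup_i \dim(A\cap\Omega_i)$ need not be attained by any single $i_0$, so in (1a), (1b), (2b) you should instead pick, for each $\varepsilon>0$, a piece with $\dim(A\cap\Omega_i)>\dim A-\varepsilon$ and pass to the limit (for (1b), first writing the exceptional set as the countable union over $n$ of $\{\lambda:\dim(\Pi_\lambda A)<\alpha-\tfrac1n\}$ so that the per-box bound applies at each level); this is routine and does not affect the structure of your argument.
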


\subsection{Preservation of transversality under coordinate changes}
\label{subsec:Auxiliary}
We now show that transversality property is  locally-preserved by pre- and post- composition with $C^2$-diffeomorphisms. This assumption aligns with our standing assumption that projection families are $C^2$-smooth.

\begin{lemma}
\label{lemma:transversalitycomposition}
Let $L\geq 2$ be an integer, and consider the following change of coordinates, where $\Lambda{},\tilde \Lambda{}\subset \R^k$, $\Omega,\tilde \Omega\subset \R^n$, and $U, \tilde U\subset \R^m$ are open domains, $f:\Lambda\rightarrow \tilde \Lambda$, $g:\Omega\rightarrow \tilde \Omega$, and $h:U\rightarrow \tilde U$ are $C^L$-diffeomorphisms, and $\tilde \Pi$ is given by $\tilde \Pi(f(\lambda),g(\omega)) =h(\Pi(\lambda,\omega))$.

\[\begin{tikzcd}
	{\Lambda \times \Omega} && U \\
	\\
	{\tilde \Lambda \times \tilde \Omega} && {\tilde U}
	\arrow["\Pi", from=1-1, to=1-3]
	\arrow["{\tilde \Pi}", from=3-1, to=3-3]
	\arrow["{f \times g}"{description}, from=1-1, to=3-1]
	\arrow["h"{description}, from=1-3, to=3-3]
\end{tikzcd}\]
If $\Pi$ is $C^L$-smooth and locally transversal, then so is $\tilde \Pi$. 
\end{lemma}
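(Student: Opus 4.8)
The plan is to exploit that (local) transversality is both a local condition and a \emph{multiplicative} one, and that the given coordinate change factors into three elementary pieces. Note first that $\tilde\Pi = h\circ\Pi\circ(f\times g)^{-1}$ is a composition of $C^L$-maps, hence $C^L$-smooth, so the difference quotient $\tilde\Phi$ and its $\lambda$-derivative are defined. I would then write the change $(f,g,h)$ as the composition of three changes of the type in the statement: a reparametrization $(f,\id,\id)$ of the parameter space, a change $(\id,g,\id)$ of the domain, and a post-composition $(\id,\id,h)$ in the target. Since the property ``preserves local transversality'' is stable under composing such changes, it suffices to treat each factor separately. Throughout I would work on a relatively compact (convex) product neighborhood $\Lambda'\times\Omega'$ on which $\Pi$ is transversal --- local transversality lets us cover $\Lambda\times\Omega$ by such --- where $f,g,h$ and their inverses have derivatives bounded above and below in operator norm and Jacobians bounded away from $0$ and $\infty$. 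In each case the key step is to express $\tilde\Phi$ in terms of $\Phi$ and track how the two sides of \eqref{eq:transGeneral} transform.

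The two easy factors are the domain change and the reparametrization. For $(\id,g,\id)$, setting $\tilde v=g(v),\tilde w=g(w)$ gives $\tilde\Phi(\lambda,\tilde v,\tilde w)=\rho(v,w)\,\Phi(\lambda,v,w)$ with the scalar $\rho(v,w)=|v-w|/|g(v)-g(w)|$, which is $\lambda$-independent and, by the bi-Lipschitz bounds for $g$ on $\Omega'$, comparable to $1$. Hence the if-side smallness of $\tilde\Phi$ is equivalent up to constants to that of $\Phi$, and since $\rho$ does not depend on $\lambda$ we have $\Diff_\lambda\tilde\Phi=\rho\,\Diff_\lambda\Phi$, so the Gram determinant scales by $\rho^{2m}$ and stays bounded below. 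For $(f,\id,\id)$ we have $\tilde\Phi(\tilde\lambda,v,w)=\Phi(f^{-1}(\tilde\lambda),v,w)$, so the if-side is literally unchanged, while the chain rule gives $\Diff_{\tilde\lambda}\tilde\Phi=(\Diff_\lambda\Phi)\,J$ with $J=\Diff(f^{-1})$ an invertible $k\times k$ matrix. Writing $B=\Diff_\lambda\Phi$ (an $m\times k$ matrix), positive-definiteness gives $BJJ^{\mathsf T}B^{\mathsf T}\succeq \sigma_{\min}(J)^2\,BB^{\mathsf T}$ as $m\times m$ matrices, so monotonicity of $\det$ on positive semidefinite matrices yields $\det(\Diff_{\tilde\lambda}\tilde\Phi(\Diff_{\tilde\lambda}\tilde\Phi)^{\mathsf T})\geq \sigma_{\min}(J)^{2m}\det(BB^{\mathsf T})$, again bounded below. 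In both cases a suitable shrinking of the transversality constant finishes the argument.

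The main obstacle is the target post-composition $(\id,\id,h)$. Here $\tilde\Phi(\lambda,v,w)=M(\lambda,v,w)\,\Phi(\lambda,v,w)$, where $M=\int_0^1 \Diff h\big(\Pi(\lambda,w)+t(\Pi(\lambda,v)-\Pi(\lambda,w))\big)\,\Diff t$ is the $m\times m$ mean-value matrix, invertible with bounded inverse on our neighborhood; the if-sides are comparable through $\|M\|,\|M^{-1}\|$. The difficulty is that differentiating in $\lambda$ now produces $\Diff_\lambda\tilde\Phi=M\,\Diff_\lambda\Phi+(\Diff_\lambda M)\,\Phi$, and the extra term $(\Diff_\lambda M)\Phi$ has no analogue in the previous cases because $M$ genuinely depends on $\lambda$ through $\Pi$. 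My plan is to absorb this term by choosing the new transversality constant $\tilde C$ small: in the relevant regime $|\tilde\Phi|\leq\tilde C$ one has $|\Phi|\leq\|M^{-1}\|\tilde C$, so $\|(\Diff_\lambda M)\Phi\|=O(\tilde C)$, whereas the main term satisfies $\det\big((M\,\Diff_\lambda\Phi)(M\,\Diff_\lambda\Phi)^{\mathsf T}\big)=(\det M)^2\det(\Diff_\lambda\Phi(\Diff_\lambda\Phi)^{\mathsf T})\geq(\det M)^2 C^2$, a fixed positive constant. Since $A\mapsto\det(AA^{\mathsf T})$ is continuous, an $O(\tilde C)$ perturbation of $M\,\Diff_\lambda\Phi$ keeps the Gram determinant above half of this lower bound, which exceeds $\tilde C^2$ once $\tilde C$ is small enough. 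This gives transversality of $\tilde\Pi$ on the image neighborhood; reassembling the three factors and recalling that local transversality only requires the conclusion neighborhood-by-neighborhood completes the proof.
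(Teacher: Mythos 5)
Your proposal is correct, and its overall architecture coincides with the paper's: factor the coordinate change into pre-composition and post-composition, handle pre-composition by the chain rule plus a lower bound for Gram determinants of products, and handle post-composition by splitting $\Diff_\lambda\tilde\Phi$ into a main term (an invertible $m\times m$ matrix times $\Diff_\lambda\Phi$, whose Gram determinant is bounded below by a fixed multiple of $C^2$) plus an error term absorbed by shrinking the transversality constant. The differences are in the implementation of the two key estimates. For the parameter change, the paper proves a separate lemma (Lemma \ref{lem:rholemma}), $\rho(AB)\geq\rho(A)\sigma(B)$, via compactness of the Grassmannian and volume distortion, whereas you obtain the same bound from $JJ^{\mathsf T}\succeq\sigma_{\min}(J)^2\,\id$ and monotonicity of the determinant on positive semidefinite matrices, which is more elementary. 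For post-composition, the paper splits $\Diff_\lambda\tilde\Psi$ as $\Diff h(\Pi(\lambda,v))\,\Diff_\lambda\Psi$ plus $\left[\Diff h(\Pi(\lambda,v))-\Diff h(\Pi(\lambda,w))\right]\Diff_\lambda\Pi(\lambda,w)$, controlling the second term by the Lipschitz constant of $\Diff h$, while you factor $\tilde\Phi=M\Phi$ exactly through the mean-value matrix $M=\int_0^1\Diff h\,\Diff t$ and differentiate the product; this is the same mechanism with different bookkeeping, and both arguments share the same implicit need (also implicit in the paper) that $\Diff_\lambda\Phi$ stays in a compact set, which follows from $C^2$-smoothness. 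Two caveats: first, your mean-value matrix requires the segment between $\Pi(\lambda,w)$ and $\Pi(\lambda,v)$ to lie in $U$ and requires $M$ to be invertible with bounded inverse; neither follows from convexity of $\Lambda'\times\Omega'$, but both are repaired by shrinking neighborhoods so that $\Pi(\Lambda'\times\Omega'^2)$ lies in a small ball on which $\Diff h$ is nearly constant. Second, the paper actually proves a quantitative refinement (Lemma \ref{lemma:transversalitycompositionquantitative}) recording how the constant degrades and that the degradation vanishes as $f,g$ approach the identity and $h$ approaches a linear map; this refinement is what gets used in the proof of Lemma \ref{lemma:thetaZero}, so your qualitative argument, while sufficient for Lemma \ref{lemma:transversalitycomposition} as stated, would need its constants tracked in the same way to serve that later purpose.
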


We will prove a quantitative version of the transversality assertion in Lemma \ref{lemma:transversalitycomposition} (the preservation of smoothness under composition is a standard fact):
\begin{lemma}\label{lemma:transversalitycompositionquantitative}
Under the assumptions and in the notation of Lemma \ref{lemma:transversalitycomposition}, let $(\lambda_0, \omega_0)\in \Lambda{}\times \Omega$. Then, there exists a neighborhood $\Lambda_0\times \Omega_0 \subset \Lambda\times \Omega $ of $(\lambda_0, \omega_0)$ and a constant $C_0>0$ such that: whenever $\Pi$ satisfies the transversality condition \eqref{eq:transGeneral} for some constant $C>0$ for a  triple $(\lambda,v, w)\in \Lambda_0\times \Omega_0^2$, then $\tilde \Pi$ satisfies \eqref{eq:transGeneral} with constant $CC_0>0$ for the triple $(f(\lambda),g(v),g(w))$.

Furthermore, $C_0$ depends only on the local bilipschitz constants of $f$, $g$, $h$ and bounds on $\Diff h$ and $\Diff f^{-1}$. Furthermore, $C_0$ approaches $1$ as $f$ and $g$ approach the identity and $h$ approaches a linear mapping, in terms of these constants.
\end{lemma}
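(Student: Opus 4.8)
The plan is to reduce the entire statement to a single algebraic identity between the difference quotient $\Phi$ of $\Pi$ and the difference quotient $\tilde\Phi$ of $\tilde\Pi$ at corresponding triples, from which both implications in \eqref{eq:transGeneral} can be read off. Write $\lambda=f^{-1}(\tilde\lambda)$, $v=g^{-1}(\tilde v)$, $w=g^{-1}(\tilde w)$, and set $\rho=|v-w|/|\tilde v-\tilde w|$, a positive scalar pinched between the local bilipschitz constants of $g$. After shrinking $\Lambda_0\times\Omega_0$ so that $\Pi(\Lambda_0\times\Omega_0)$ lies in a convex neighborhood of $y_0:=\Pi(\lambda_0,\omega_0)$ inside $U$, the fundamental theorem of calculus applied to $h$ along the segment from $\Pi(\lambda,w)$ to $\Pi(\lambda,v)$ gives $h(\Pi(\lambda,v))-h(\Pi(\lambda,w))=\overline{\Diff h}\,(\Pi(\lambda,v)-\Pi(\lambda,w))$ with averaged Jacobian $\overline{\Diff h}=\int_0^1\Diff h(p_t)\,\dm t$. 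Dividing by $|\tilde v-\tilde w|$ gives the identity
\[\tilde\Phi(\tilde\lambda,\tilde v,\tilde w)=\rho\,\overline{\Diff h}\,\Phi(\lambda,v,w).\]
By continuity of $\Diff h$, after further shrinking the neighborhood $\overline{\Diff h}$ is as close as desired to the invertible matrix $\Diff h(y_0)$, so $\sigma_{\min}(\overline{\Diff h})$ is bounded below and $\|\overline{\Diff h}\|$ above.

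The if-side then follows at once: inverting the identity gives $|\Phi|\le \rho^{-1}\sigma_{\min}(\overline{\Diff h})^{-1}|\tilde\Phi|$, so imposing $C_0\le\rho\,\sigma_{\min}(\overline{\Diff h})$ ensures that $|\tilde\Phi|\le CC_0$ forces $|\Phi|\le C$; the assumed transversality of $\Pi$ at $(\lambda,v,w)$ then yields $|\det(\Diff_\lambda\Phi\,(\Diff_\lambda\Phi)^{\mathsf T})|\ge C^2$.

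For the then-side I would differentiate the identity in $\tilde\lambda$ (so $\rho$ is constant and only $\lambda=f^{-1}(\tilde\lambda)$ varies); the chain rule with $J:=\Diff f^{-1}(\tilde\lambda)$ and the product rule on $\overline{\Diff h}\,\Phi$ give
\[\Diff_{\tilde\lambda}\tilde\Phi=\rho\,\overline{\Diff h}\,(\Diff_\lambda\Phi)\,J+\rho\,R\,J=:A+E,\]
where the $j$-th column of $R$ is $(\partial_{\lambda_j}\overline{\Diff h})\Phi$, so that $\|R\|\le c_1\sup\|\Diff^2 h\|\,\sup\|\Diff_\lambda\Pi\|\,|\Phi|$; crucially this remainder is governed by the second derivative of $h$ and vanishes when $h$ is linear. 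The main term $A$ is bounded below as follows: since $JJ^{\mathsf T}-\sigma_{\min}(J)^2\,\id$ is positive semidefinite, monotonicity of the determinant on positive-semidefinite matrices gives $\det((\Diff_\lambda\Phi)JJ^{\mathsf T}(\Diff_\lambda\Phi)^{\mathsf T})\ge\sigma_{\min}(J)^{2m}\det((\Diff_\lambda\Phi)(\Diff_\lambda\Phi)^{\mathsf T})$, and combined with $\det(\overline{\Diff h}\,M\,\overline{\Diff h}^{\mathsf T})=\det(\overline{\Diff h})^2\det M$ this yields $\det(AA^{\mathsf T})\ge\rho^{2m}\det(\overline{\Diff h})^2\sigma_{\min}(J)^{2m}\,|\det(\Diff_\lambda\Phi(\Diff_\lambda\Phi)^{\mathsf T})|\ge c_4\,C^2$.

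The hard part is absorbing the nonlinear remainder $E$ while keeping the final constant independent of $C$. Here the if-side does double duty: the premise $|\tilde\Phi|\le CC_0$ forces $|\Phi|\le c_2 C_0 C$, hence $\|E\|\le c_3\,C_0C$, whereas the a priori bound $\|\Diff_\lambda\Phi\|\le\sup\|\Diff_\omega\Diff_\lambda\Pi\|$ (finite by $C^2$-smoothness, as $\Diff_\lambda\Phi$ is a difference quotient of $\Diff_\lambda\Pi$) bounds $\|A\|$ above and, with $\det(AA^{\mathsf T})\ge c_4C^2$, bounds the smallest singular value $\sigma_{\min}(A)\ge c_5 C$. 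Choosing $C_0\le c_5/(2c_3)$, a bound independent of $C$, keeps $\|E\|\le\tfrac12\sigma_{\min}(A)\le\tfrac12\sigma_i(A)$, so Weyl's inequality gives $\sigma_i(A+E)\ge\tfrac12\sigma_i(A)$ for each $i$ and therefore $|\det(\Diff_{\tilde\lambda}\tilde\Phi\,(\Diff_{\tilde\lambda}\tilde\Phi)^{\mathsf T})|\ge 4^{-m}\det(AA^{\mathsf T})\ge 4^{-m}c_4C^2$; taking in addition $C_0\le 2^{-m}\sqrt{c_4}$ makes this at least $(CC_0)^2$. All the quantities $\rho,\sigma_{\min}(\overline{\Diff h}),\det(\overline{\Diff h}),\sigma_{\min}(J),\|J\|,\sup\|\Diff^2 h\|,\sup\|\Diff_\lambda\Pi\|,\sup\|\Diff_\omega\Diff_\lambda\Pi\|$ are continuous, hence uniformly controlled on the relatively compact $\overline{\Lambda_0\times\Omega_0^2}$; taking the infimum of the finitely many bounds on $C_0$ produces a single $C_0>0$ valid throughout and independent of $C$, with the asserted dependence on the bilipschitz and derivative data of $f,g,h$. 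For the limiting claim, when $h$ is linear $\Diff^2h\equiv 0$, so $E\equiv 0$, the perturbation step is vacuous and one may take $C_0=\rho^m|\det(\overline{\Diff h})|\,\sigma_{\min}(J)^m$, which tends to $1$ as $f,g\to\id$ and $h$ approaches an isometry.
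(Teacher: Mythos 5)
Your proof is correct, and it takes a genuinely different route from the paper's. The paper splits the coordinate change into two cases, pre-composition (taking $h=\id$) and post-composition (taking $f$ and $g$ to be identities), treated by separate chain-rule computations: pre-composition rests on the auxiliary Lemma \ref{lem:rholemma}, whose function $\sigma$ is produced by a compactness argument on the Grassmannian $G(k,m)$, while post-composition decomposes $\Diff_\lambda\tilde\Psi=A+B$ and absorbs the error $B$ using Lipschitz continuity of $M\mapsto\det[MM^\mathsf{T}]^{1/2}$ on a compactum avoiding its zero set, after weakening the premise by an auxiliary factor $K$. You instead treat $f$, $g$, $h$ simultaneously through the averaged-Jacobian identity $\tilde\Phi=\rho\,\overline{\Diff h}\,\Phi$ (with $\rho=|v-w|/|\tilde v-\tilde w|$); you replace Lemma \ref{lem:rholemma} by the elementary observation that $JJ^\mathsf{T}-\sigma_{\min}(J)^2\id$ is positive semidefinite together with monotonicity of the determinant on positive semidefinite matrices; and you absorb the nonlinear remainder $E$ by Weyl's singular-value perturbation inequality, folding the role of the paper's $K$ into the constraints on $C_0$ itself. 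What each approach buys: the paper's split is modular and each half is a short computation, but its absorption step must place $A$ and $A+B$ (suitably normalized) in a fixed compactum disjoint from the zero set of $M\mapsto\det[MM^\mathsf{T}]^{1/2}$, a point requiring care since these matrices degenerate as $C\to0$; your multiplicative bound $\det[(A+E)(A+E)^\mathsf{T}]\ge 4^{-m}\det[AA^\mathsf{T}]$ is uniform in $C$ because both $\|E\|$ and $\sigma_{\min}(A)$ scale linearly in $C$, so their ratio is governed by $C_0$ alone, and the compactness issue never arises. Two minor points: in the linear-$h$ case your choice of $C_0$ should also respect the if-side constraint $C_0\le\rho\,\sigma_{\min}(\overline{\Diff h})$ (both bounds tend to $1$ in the limit, so the conclusion is unaffected); and your reading of the limiting claim, with $h$ approaching a linear \emph{isometry} rather than an arbitrary linear map, is the precise interpretation, since for a general linear $h$ neither your construction nor the paper's produces $C_0\to1$.
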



For a matrix $M$, let $\rho(M)=\det[MM^\mathsf{T}]^{1/2}$. Note that this determinant is always non-negative (cf.~the Gram determinant). We will need the following composition result:

\begin{lemma}\label{lem:rholemma}
There is a continuous positive function $\sigma: GL(k,\R)\rightarrow \R$ satisfying $\sigma(\id)=1$ such that for all $A\in M^{m\times k}$  and  $B\in GL(k,\R)$one has
$\rho(AB)\geq \rho(A)\sigma(B)$.
\end{lemma}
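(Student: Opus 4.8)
The plan is to reinterpret $\rho$ through the Loewner order on positive semidefinite matrices and reduce the claimed inequality to determinant monotonicity. Recall that for an $m\times k$ matrix $M$ (with $m\le k$, as in our setting $k\ge m$) the quantity $\rho(M)=\det(MM^{\mathsf{T}})^{1/2}$ equals the product of the singular values of $M$, and in particular $\rho(M)=0$ exactly when $M$ fails to have full row rank. I first dispose of this degenerate case: if $\rho(A)=0$ then $\rho(AB)\ge 0 = \rho(A)\sigma(B)$ holds trivially for any positive $\sigma$, so I may henceforth assume $AA^{\mathsf{T}}$ is positive definite.

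The central computation is
\[
\rho(AB)^2=\det\!\big(AB(AB)^{\mathsf{T}}\big)=\det\!\big(A\,S\,A^{\mathsf{T}}\big),\qquad S:=BB^{\mathsf{T}},
\]
where $S$ is a symmetric positive definite $k\times k$ matrix because $B\in GL(k,\R)$. Writing $\lambda:=\lambda_{\min}(S)>0$ for its smallest eigenvalue, I have $S-\lambda I\succeq 0$, and congruence by $A$ preserves positive semidefiniteness, so $A(S-\lambda I)A^{\mathsf{T}}\succeq 0$, i.e.\ $ASA^{\mathsf{T}}\succeq \lambda\,AA^{\mathsf{T}}$ in the Loewner order. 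Invoking monotonicity of the determinant on positive semidefinite matrices (if $P\succeq Q\succeq 0$ then $\det P\ge \det Q$) together with homogeneity of the $m\times m$ determinant gives
\[
\rho(AB)^2=\det(ASA^{\mathsf{T}})\ge \det(\lambda\,AA^{\mathsf{T}})=\lambda^m\det(AA^{\mathsf{T}})=\lambda^m\,\rho(A)^2 .
\]
Taking square roots yields $\rho(AB)\ge \lambda_{\min}(BB^{\mathsf{T}})^{m/2}\,\rho(A)$, which is exactly the desired bound.

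I then define $\sigma(B):=\lambda_{\min}(BB^{\mathsf{T}})^{m/2}$; equivalently $\sigma(B)=\sigma_{\min}(B)^m=\|B^{-1}\|^{-m}$, the $m$-th power of the least singular value of $B$. It is positive on $GL(k,\R)$, since invertibility of $B$ forces $BB^{\mathsf{T}}$ to be positive definite and hence $\lambda_{\min}>0$; it is continuous because $B\mapsto BB^{\mathsf{T}}$ is continuous and the smallest eigenvalue of a symmetric matrix depends continuously on its entries (the description via $\|B^{-1}\|^{-m}$ makes continuity on $GL(k,\R)$ especially transparent); and $\sigma(\id)=\lambda_{\min}(I)^{m/2}=1$.

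There is no serious obstacle here. The only points requiring care are treating rank-deficient $A$ separately, so that positive-definiteness of $AA^{\mathsf{T}}$ may be used freely, and justifying the Loewner-order monotonicity of the determinant; the latter follows from the congruence $\det P=\det Q\cdot\det\!\big(Q^{-1/2}PQ^{-1/2}\big)$ valid for invertible $Q=\lambda AA^{\mathsf{T}}$, since $P\succeq Q$ gives $Q^{-1/2}PQ^{-1/2}\succeq I$ and hence determinant at least $1$.
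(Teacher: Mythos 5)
Your proof is correct, but it takes a genuinely different route from the paper's. The paper argues geometrically: it interprets $\rho(A)=\det(AA^{\mathsf{T}})^{1/2}$ as the $m$-dimensional volume distortion of $A^{\mathsf{T}}:\R^m\to\R^k$, introduces the distortion function $\tau(M,V)$ of $M\vert_V$ for $V$ ranging over the Grassmannian $G(k,m)$, and defines $\sigma(B)$ as a minimum of such distortions over the compact space $G(k,m)$; the inequality then follows from multiplicativity of volume distortion under composition, with the image plane $A^{\mathsf{T}}(\R^m)$ playing the role of $V$. You instead stay entirely inside matrix analysis: writing $\rho(AB)^2=\det(ASA^{\mathsf{T}})$ with $S=BB^{\mathsf{T}}$, using the congruence bound $ASA^{\mathsf{T}}\succeq \lambda_{\min}(S)\,AA^{\mathsf{T}}$, and invoking Loewner monotonicity of the determinant to get the explicit constant $\sigma(B)=\lambda_{\min}(BB^{\mathsf{T}})^{m/2}=\sigma_{\min}(B)^m$. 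What your approach buys: a closed-form $\sigma$ whose positivity, continuity, and normalization $\sigma(\id)=1$ are immediate, with no appeal to compactness of $G(k,m)$ or to smoothness of $\tau$; moreover your separate treatment of the case $\rho(A)=0$ automatically covers the degenerate situation $k<m$ (where $AA^{\mathsf{T}}$ is always singular), which the paper handles as a separate first step. What the paper's approach buys: a conceptually transparent and in fact sharper constant --- the worst-case distortion of $B^{\mathsf{T}}$ over $m$-planes, which in singular-value terms is a product of the $m$ smallest singular values of $B$ and hence at least as large as your $\sigma_{\min}(B)^m$ --- together with a geometric picture that the paper reuses in the block-matrix lemma following Proposition \ref{prop:enlargedFamily}. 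Since the lemma only asserts the existence of some continuous positive $\sigma$ with $\sigma(\id)=1$, either constant suffices, and every step of your argument (congruence preserves the positive semidefinite order, determinant monotonicity via $Q^{-1/2}PQ^{-1/2}\succeq I$ for invertible $Q$, continuity of $\lambda_{\min}$) is standard and correctly justified.
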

\begin{proof}
If $k<m$, then the statement is trivial, as both sides of the equality are zero.

Consider first the function $\tau: GL(k,\R)\times G(k,m)\rightarrow \R$ which maps a pair $(M,V)$ to the volume distortion of the mapping $M\vert_V: V \rightarrow M(V)$. The function $\tau$ varies smoothly with $M$ and $V$ (indeed, it can be written down explicitly using Gram matrices and an identification of $V$ with $\R^k$). Noting that $G(k,m)$ is compact, set $\sigma(B) = \min_{V\in G(k,m)}\tau(B^\mathsf{T},V)^2$. Clearly, $\sigma$ is continuous and $\sigma(\id)=1$.

Consider now an arbitrary pair $A,B$ as in the statement of the lemma, and write $a=A^\mathsf{T}$ and $b=B^\mathsf{T}$. We then need to show that $\det((ab)^\mathsf{T}(ab))\geq \det(a^\mathsf{T}a)^{1/2}\sigma(B)$. The lemma follows immediately from the fact that $\det(a^\mathsf{T} a)^{1/2}$ computes the $m$-volume distortion induced by $a:\R^m\rightarrow \R^k$ and $\det((ab)^\mathsf{T} (ab))^{1/2}$ computes the $m$-volume distortion induced by $ab:\R^m\rightarrow \R^k$.
\end{proof}

\begin{proof}[Proof of Lemma \ref{lemma:transversalitycompositionquantitative}] Let $(\lambda_0,\omega_0)\in \Lambda\times \Omega$ and let $\Lambda_0\times \Omega_0 \subset \Lambda\times \Omega $ be a neighborhood of $(\lambda_0, \omega_0)$ that is sufficiently small so that the specific Lipschitz and co-Lipschitz constants that we will explicitly define in the sequel of this proof exist.
Furthermore, we write $U_0=\Pi(\Lambda_0\times \Omega_0^2)$ and $\Psi(\lambda, v,  w):=\Pi(\lambda, v)-\Pi(\lambda,  w)$ as well as $\tilde \Psi(\tilde \lambda, \tilde v, \tilde w):= \tilde \Pi(\tilde\lambda, \tilde v)-\tilde \Pi(\tilde\lambda, \tilde  w)$ for points $(\lambda, v, w)\in \Lambda\times(\Omega)^2$ resp.\ $(\tilde\lambda,  \tilde v,\tilde w)\in \tilde\Lambda\times(\tilde\Omega)^2$.

Now, let $(\lambda,v, w)\in \lambda_0\times \Omega_0^2$ be a fixed triple and set $(\tilde \lambda, \tilde v, \tilde  w)=(f(\lambda),g(v),g(w)) $. Let $C>0$ a constant. We assume that the transversality condition \eqref{eq:transGeneral} for the family $\Pi$ holds for  $(\lambda,v, w)$ with constant $C$, that is,
\begin{equation}\label{eq:complemma_assume_transv}
 \text{ if \ } \frac{|\Psi(\lambda,v, w)|}{|v- w|}\leq C \text{ \ then \ }
\frac{ \left|\det[ \Diff_\lambda\Psi(\lambda,v, w)(\Diff_\lambda\Psi(\lambda,v, w))^\mathsf{T}]\right|}{|v- w|^2}\geq C^2.
\end{equation}

We analyze pre-composition (with $f$ and $g$) and post-composition (with $h$) separately.

For \underline{pre-composition}, we assume $h$ is the identity.
 Let $ C_1>0$ be the co-Lipschitz constant of $g$ on $\Omega_0$ and assume that $\tilde \Pi$ satisfies the if-part of the transversality condition \eqref{eq:transGeneral} with constant $CC_1$   for the triple $(\tilde \lambda, \tilde v, \tilde  w)$, that is, \begin{equation}
 \label{eq:precompLHS}
 \frac{| \tilde \Psi(\tilde \lambda, \tilde v, \tilde  w)|}      {\norm{\tilde v-\tilde  w}}\leq CC_1.\end{equation} 

 By definition of $C_1$ and since $\Psi(\lambda,u,v)=\tilde \Psi(\tilde \lambda, \tilde v, \tilde  w)$ by definition, equation \eqref{eq:precompLHS} implies the if-part of equation \eqref{eq:complemma_assume_transv}. Hence also the then-part of equation \eqref{eq:complemma_assume_transv} follows.
 
  Notice that since $\tilde{\Psi}(\tilde \lambda, \tilde  v, \tilde  w )= \Psi(f^{-1}(\tilde \lambda), g ^{-1}(\tilde  v)), g^{-1} (\tilde  w))$, by chain rule for differentiation we can write 
 \begin{equation}\label{eq:precomp_chainrule}
     \Diff_{\tilde \lambda}\tilde{\Psi}(\tilde \lambda, \tilde  v, \tilde  w ) =\Diff_\lambda \Psi(f^{-1}(\tilde \lambda), g ^{-1}(\tilde  v)), g^{-1} (\tilde  w))\cdot \Diff f^{-1}(\tilde \lambda).
 \end{equation}
  Let $C_2:= \min\{\sigma(\Diff f^{-1}(\tilde \lambda)): \tilde \lambda \in f(\Lambda_0)\}$ where $\sigma$ is as in Lemma~\ref{lem:rholemma}. Then
 \begin{equation}
 \label{eq:precompRHS_2}
 \frac{|\det\left [\Diff_{\tilde\lambda} \tilde \Psi( \tilde\lambda, \tilde v, \tilde w)\Diff_{\tilde\lambda} \tilde \Psi( \tilde\lambda, \tilde v, \tilde w)^\mathsf{T}\right]|}      {\norm{\tilde v-\tilde w}^{2}}\geq C^2 (C_1C_2)^2.
 \end{equation}
Choosing $C_0:=\min\{C_1,C_1C_2\}$ concludes the proof for precomposition.

For \underline{post-composition}, we assume that $f$ and $g$ are the respective identity mappings.
 Let $C_3$ be the co-Lipschitz constant of $h$ on $U_0$. Recall that here $\tilde \Psi( \lambda,   v,   w )= h(\Pi( \lambda,  v))-h(\Pi( \lambda,  w))$ and assume that \begin{equation}
 \label{eq:postcomp_LHS}
 \frac{| h(\Pi(\lambda,  v))-h(\Pi(\lambda,  w))|}      {\norm{  v-  w}}\leq CC_3K \end{equation} where $0<K\leq 1$ is a constant that will be explicitely defined later. ($K$ will be independent of the constant $C$ as well as independent of the choice of the points $\lambda,v,w$.) By equation \eqref{eq:postcomp_LHS} and the choice of $C_3$ it follows that
\begin{equation}\label{eq:postcomp_LHS2}
\frac{| \Pi(\lambda,  v)-\Pi(\lambda,  w)|}      {\norm{  v-  w}}\leq KC\leq C \end{equation} and by \eqref{eq:complemma_assume_transv}
 \begin{equation}\label{eq:postcomp_RHS1}
 \frac{|\det\left [(\Diff_\lambda \Psi( \lambda,  v,  w))(\Diff_\lambda \Psi( \lambda,  v,  w))^\mathsf{T}\right]|}      {\norm{ v- w}^2}\geq C^2.\end{equation}
 Hence by the chain rule for differentiation:
 \begin{align}\label{eq:postcomp_chainrule1}
     \Diff_{\lambda}\tilde{\Psi}(\lambda,  v,  w )
     &=\Diff h(\Pi(\lambda, v))\cdot 
     \Diff_\lambda \Pi(\lambda,  v)-\Diff h(\Pi(\lambda, w))\cdot 
     \Diff_\lambda \Pi(\lambda,  w)\\ \label{eq:postcomp_chainrule2}
     & = \Diff h(\Pi(\lambda, v))\cdot \left[
     \Diff_\lambda \Psi(\lambda,  v, w)\right]\\
     & \ \ \ \label{eq:postcomp_chainrule3}
     +\left[ \Diff h(\Pi(\lambda, v)) - \Diff h(\Pi(\lambda, w)) \right]\cdot \Diff_\lambda \Pi(\lambda, w)
 \end{align}
 To abbreviate the notation in the following computations, we denote the product of matrices on line \eqref{eq:postcomp_chainrule2} by $A$ and the product of matrices on line \eqref{eq:postcomp_chainrule3} by $B$. So, in particular $\Diff_{\lambda}\tilde{\Psi}(\lambda,  v,  w )=A+B$. We will now show that the determinant of $A$ is big and that $B$ is small in a suitable sense. To this end, let $C_4:=\min\{|\det \Diff h(u)|: u\in U_0\} $.
 Since $\Diff h(u)$ for $u\in U$ is a square matrix and by \eqref{eq:postcomp_RHS1}, we have
 \begin{equation}\label{eq:estimateA}
     \frac{\det[AA^\mathsf{T}]}{\norm{ v- w}^2}= \det\left[\Diff h(\Pi(\lambda, v))\right]^2\frac{\det \left[
     \Diff_\lambda \Psi(\lambda,  v, w)\Diff_\lambda \Psi(\lambda,  v, w)^\mathsf{T}\right]}{\norm{ v- w}^2}\geq C_4^2C^2.
 \end{equation} 
 Notice that in the case where $h$ is a linear mapping, then we have $B=0$ and thus \eqref{eq:estimateA} concludes the proof for postcomposition (choose $K=1$ and $C_0=\min\{C_3,C_4\}$). However, we may in general not assume that $h$ is linear. 
 Let $C_5\geq 0$ be the local Lipschitz constant of the mapping $\Diff h: U_0 \to \R^{m\times m}$  in terms of the Euclidean norm 
$|\cdot|$ on $U_0$ and the supremum norm $\|\cdot\|_\infty$ (i.e. absolute value of maximal entry)  on the space $\R^{m\times m}$ of square matrices. (Notice that $C_5=0$ is the case where $h$ is linear). By definition of $C_5$ and by \eqref{eq:postcomp_LHS2} we have
  \begin{equation}\label{eq:estimateB1}
  \frac{\|\Diff h(\Pi(\lambda, v))-\Diff h(\Pi(\lambda, w))\|_\infty}{\norm{ v- w}}\leq C_5 \frac{|\Pi(\lambda, v)-\Pi(\lambda, w)|}{\norm{ v- w}} \leq C_5 C K.
  \end{equation}
  It is an easy to check fact that the supremum norm for matrices has the following property: For matrices $M\in \R^{m\times m}$ and  $N\in \R^{m\times l}$ (where $l\in N$), $\|MN\|_\infty\leq m\|M\|_\infty\|N\|_\infty$. Define $C_6:=\sup\{\|\Diff_\lambda \Pi(\lambda,\omega)\|_\infty : \omega\in\Omega_0\} $. Combining this fact with equation \eqref{eq:estimateB1} yields 
  
  \begin{equation}
 \label{eq:estimateB2}
  \frac{||B||_\infty}{{\norm{ v- w}}}\leq m  \frac{\|\Diff h(\Pi(\lambda, v))-\Diff h(\Pi(\lambda, w))\|_\infty}{\norm{ v- w}} \|\Diff_\lambda \Pi_2\|_\infty\leq mC_5KC_6C.
 \end{equation}
 
 Now consider the function $\rho: \R^{m\times k}\to \R$ defined by $\rho(M)=\det [MM^\mathsf{T}]^\frac{1}{2}$. We equip $\R^{m\times k}$ with the supremum norm and $\R$ with the Euclidean norm. Then $\rho$ is continuous on $\R^{m\times k}$ and it is Lipschitz on compacta in  $\R^{m\times k}\setminus \rho^{-1}(\{0\})$. 
 By continuity of $\rho$ and by choosing $K$ sufficiently small, employing equations \eqref{eq:estimateA} and \eqref{eq:estimateB2}, it follows that for all point triples $\lambda\in \lambda_0,v,w\in \Omega_0 $ , the matrices $A$ and $A+B$ live in a compactum in $\R^{m\times k}\setminus \rho^{-1}(\{0\})$. Set $C_7>0$ to be the Lipschitz constant of $\rho$ on said compactum with respect to the metric $\|\cdot\|_\infty$ on $\R^{m\times k}$ and $|\cdot|$ on $\R$.
 Hence, 
 \begin{equation}\label{eq:estimateA+B}
    | \det[(A+B)(A+B)^\mathsf{T} ]^\frac{1}{2}- \det[AA^\mathsf{T} ]^\frac{1}{2}|\leq C_7\|(A+B)-A\|_\infty\leq \|B\|_\infty
 \end{equation}
 and finally
 \begin{equation}\label{eq:estimateA+Bfinal}
 \frac{\det[\Diff_\lambda\tilde\Psi(\Diff_\lambda\tilde\Psi)^\mathsf{T}] ^\frac{1}{2}}{\norm{v-w}} 
 \geq  \frac{\det[AA^\mathsf{T}]^\frac{1}{2}}{\norm{v-w}}-C_7 \frac{\|BB^\mathsf{T} \|_\infty}{\norm{v-w}}\geq (C_4-mKC_5C_6C_7)C.
 \end{equation}
 We choose $0<K<\frac{C_4}{mC_5C_6C_7}$ (resp.\ $K=1$ in the case when $C_5=0$, see below equation \eqref{eq:estimateA}) and set $C_0:=\min\{KC_3, C_4-mKC_5C_6C_7\}$. Then the assumption \eqref{eq:postcomp_LHS} together with the conclusion in equation \eqref{eq:estimateA+Bfinal} conclude the proof for postcomposition. 
  \end{proof}
 
 \begin{remark} \emph{}
\begin{enumerate}[label={(\roman*)\ }, itemsep=2pt,topsep=1pt]
    \item A slightly more general form of Lemma \ref{lemma:transversalitycomposition} is  true with the same proof. Namely, we may assume that the parameter variable $\lambda$ depends on both the new $\tilde \lambda$ and $\tilde \omega$ variables. However, making the parameter $\lambda$ depend on the choice of the point $\tilde \omega$ does not seem like a natural scenario in the setting of projections. Moreover, notice that we cannot generally allow the space variable $\omega$ to depend on both $\tilde \lambda$ and $\tilde \omega$. Lemma~\ref{lemma:transversalitycomposition} may indeed fail in that case.
    \item In \cite{PS2000} and \cite{Mattila2004} the regularity assumptions are much weaker: they assume the differentiability of $\Pi(\lambda,\omega)$ in $\lambda$ and the boundedness of (all orders of) derivatives $\Diff_\lambda$ (locally) uniformly in $\Omega$. In our smooth geometric settings these technicalities can be avoided. 
    If we disregard differentiability of $\Pi$ on the product space and are only interested in the preservation of transversality in Lemma \ref{lemma:transversalitycompositionquantitative}, then it suffices to assume $C^1$-smoothness for the diffeomorphisms $f,g,h$ and the existence of the constants $C_2, C_4,$ and $C_5$ in the proof, defined in terms of the differentials of $f$ and $h$.
    This makes Lemma \ref{lemma:transversalitycompositionquantitative} applicable in more general settings where much weaker regularity is assumed for projection families.
\end{enumerate}
\end{remark}

\subsection{Local transversality for projection families on manifolds}\label{sec:trans_on_mfds}

 In view of Lemma \ref{lemma:transversalitycomposition}, we may speak of local transversality for a projection family whose domain, parameter space, and/or target are smooth manifolds, by working in coordinates.

Let $n,m,k\in \N$ with $k\geq m$.  Let $N$, $M$, and $\Lambda$ be manifolds of dimension $n$, $m$, and $k$ respectively.

As in the Euclidean case (beginning of Section \ref{subsec:PrelimTransversality}) we refer to a continuous mapping $\Pi:\Lambda\times N\to M$ as a \emph{family of projections with parameter space $\Lambda$} and we denote individual mappings in the family by $\Pi_\lambda (p)=\Pi(\lambda, p)$.

\begin{definition}
Let $L\geq 2$. We call $\Pi$ a {\it $C^L$-smooth projection family} if the following regularity assumptions hold:
$\Lambda$, $N$, and $M$ are each equipped with a $C^L$-smooth atlas and $\Pi$ is $C^{L}$-smooth.\end{definition}

\begin{definition}
A $C^L$-smooth projection family $\Pi$ is called {\it locally transversal} if Definition~\ref{defin_transversality_Euclidean} is satisfied in terms of the $C^L$-smooth local coordinates of $\Lambda$, $N$, and $M$.
\end{definition}

\subsection{Transversality for families induced by group actions}\label{sec:trans_for_groups}
Recall that an action of a Lie group $\Gamma$ on a manifold $N$ is smooth if the corresponding mapping $\Gamma\times N \rightarrow N$ is smooth.

We first show that transversality is easier to check for families induced by group actions:

\begin{lemma}
\label{lemma:thetaZero}
Let $N$ be a smooth manifold, $\Gamma$ a Lie group acting smoothly on $N$, and $S_0\subset N$ a closed subset. Let $S=\{(\gamma,p)\in \Gamma\times N: \gamma(p)\in S_0 \}$. For a projection family $\Pi:\Gamma\times N \rightarrow M$ with domain $(\Gamma\times N)\setminus S$ induced by the action of $\Gamma$ on $N$, the following are equivalent:
\begin{enumerate}[label={(\roman*)\ }, itemsep=2pt,topsep=1pt]
    \item (Local transversality) the family $\Pi$ is locally transversal.
    \item (Local transversality near the identity) for every point $(\gamma,v)$ in the domain with $\gamma=\id$, there exists a neighborhood in $\Gamma\times N$ so that $\Pi$ restricted to that neighborhood is transversal.
    \item (Local transversality at the identity) there exists a neighborhood $U$ in $(N\setminus S_0)$ and a constant $C>0$ such that the transversality condition \eqref{eq:transGeneral} holds for all triples $(\id,  v,  w)$ in the domain satisfying $v,  w \in U$. (By the requirement that triples $(\id,  v,  w)$ should lie in the domain, we mean that $(\id,v),(\id,w)\in (\Gamma\times N)\setminus S$).
\end{enumerate}
\end{lemma}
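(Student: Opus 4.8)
The plan is to establish the cycle (i) $\Rightarrow$ (ii) $\Rightarrow$ (iii) $\Rightarrow$ (i), using as the sole engine the defining symmetry $\Pi(\gamma,p)=\Pi(\id,\gamma(p))$ of a group-induced family, fed into the change-of-coordinates results of Lemmas~\ref{lemma:transversalitycomposition} and~\ref{lemma:transversalitycompositionquantitative}. The implication (i) $\Rightarrow$ (ii) is immediate, since a cover of $(\Gamma\times N)\setminus S$ by transversal neighborhoods in particular provides a transversal neighborhood of each domain point of the form $(\id,v)$. For (ii) $\Rightarrow$ (iii) I would fix one identity point $(\id,v_0)$ in the domain (one exists unless $S_0=N$, a vacuous case), take a transversal product-neighborhood $\Lambda_0\times U$ of it, and read off that \eqref{eq:transGeneral} holds for all triples $(\id,v,w)$ with $v,w\in U$ with the same constant, which is precisely (iii).

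The substance is the reduction of (iii) to (i), and I would organize it around one coordinate change. Fix $\gamma_0\in\Gamma$, let $f(\gamma)=\gamma\gamma_0^{-1}$ be right translation on the group, let $g$ be the diffeomorphism $p\mapsto\gamma_0(p)$ of $N$ (which is $C^L$ in charts because the action $\Gamma\times N\to N$ is smooth), and let $h=\id$ on the target. The symmetry then gives
\[
\tilde\Pi\bigl(\gamma\gamma_0^{-1},\gamma_0(p)\bigr)=\Pi(\gamma,p)=\Pi\bigl(\id,\gamma(p)\bigr)=\Pi\bigl(\id,(\gamma\gamma_0^{-1})(\gamma_0(p))\bigr),
\]
so the reparametrized family $\tilde\Pi$ defined by $\tilde\Pi(f(\gamma),g(p))=h(\Pi(\gamma,p))$ is $\Pi$ itself. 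This coordinate change also preserves the excluded set $S$: the domain condition $\gamma(p)\notin S_0$ becomes $\gamma'(q)\notin S_0$ in the variables $\gamma'=\gamma\gamma_0^{-1}$, $q=\gamma_0(p)$. As $f,g,h$ are diffeomorphisms, applying Lemma~\ref{lemma:transversalitycompositionquantitative} to this change and to its inverse shows that transversality of $\Pi$ near $(\gamma_0,p_0)$ is equivalent to transversality of $\Pi$ near the identity point $(\id,\gamma_0(p_0))$. Thus (i) reduces to producing a transversal neighborhood of every identity point, i.e.\ to (ii).

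It remains to pass from (iii) to (ii), that is, to de-localize the parameter from $\gamma=\id$ exactly to a full neighborhood of $\id$. Given an identity point $(\id,q_0)$ with $q_0\in U$, I would estimate transversality at a triple $(\gamma,v,w)$ with $\gamma$ near $\id$ and $v,w$ near $q_0$ by applying the coordinate change above with $\gamma_0$ replaced by the varying element $\gamma$: it carries $(\gamma,v)$ and $(\gamma,w)$ to $(\id,\gamma v)$ and $(\id,\gamma w)$. For $v,w$ close to $q_0\in U$ the translated points $\gamma v,\gamma w$ still lie in $U$, so \eqref{eq:transGeneral} holds there with the constant $C$ of (iii). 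Since the associated diffeomorphisms $(\cdot)\gamma^{-1}$ and $\gamma(\cdot)$ tend to the identity as $\gamma\to\id$ while $h=\id$ stays linear, the quantitative Lemma~\ref{lemma:transversalitycompositionquantitative} supplies a distortion factor $C_0\to1$, uniformly for $\gamma$ in a small neighborhood of $\id$; transferring back yields transversality at $(\gamma,v,w)$ with constant close to $C$, hence a transversal neighborhood of $(\id,q_0)$.

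I expect this parameter de-localization to be the main obstacle: upgrading the hypothesis (iii), which constrains the family only at $\gamma=\id$, to a genuine neighborhood in the parameter is exactly where the uniform estimate ``$C_0\to1$ as $f,g\to\id$'' of Lemma~\ref{lemma:transversalitycompositionquantitative} is indispensable, and one must verify that it is uniform over the family of coordinate changes $(\cdot)\gamma^{-1},\,\gamma(\cdot)$ as $\gamma$ varies near $\id$. Two bookkeeping points accompany it: that the coordinate change preserves $S$ so that every triple it produces remains in the domain, which I checked above; and that the reduced base variable $q_0=\gamma_0(p_0)$ sweeps out all of $N\setminus S_0$ as $(\gamma_0,p_0)$ ranges over the domain, so that to conclude (i) everywhere one applies (iii) with $U$ covering a neighborhood of each base point of $N\setminus S_0$. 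In our applications this is harmless, since transversality at the identity is checked by a direct computation valid across the whole base.
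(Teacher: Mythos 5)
Your proposal is correct and takes essentially the same route as the paper's proof: the same group-symmetry observation that the reparametrized family $\tilde\Pi(\gamma\gamma_0^{-1},\gamma_0 v)=\Pi(\gamma,v)$ is $\Pi$ itself, fed into Lemmas \ref{lemma:transversalitycomposition} and \ref{lemma:transversalitycompositionquantitative}, with the parameter de-localization resting on the distortion constant tending to $1$ as the coordinate change approaches the identity. Even your closing bookkeeping caveat—that to obtain (i) on the whole domain one must apply (iii) with neighborhoods covering every base point of $N\setminus S_0$—matches the reading implicit in the paper's proof and in how the lemma is invoked in its applications.
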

\begin{proof}
It is immediate from the definition of local transversality that (1) implies (2) and that (2) implies (3).

To prove that (2) implies (1), suppose that $(\gamma_0,v_0)\in (\Gamma\times N)\setminus S$. By the assumption of~(2), the projection family $\Pi$ is transversal in a neighborhood of $(\id, \gamma_0 v_0)$. By Lemma~\ref{lemma:transversalitycomposition}, the projection family $\tilde \Pi(\gamma,v):=\Pi(\gamma \gamma_0^{-1},\gamma_0 v)$ is transversal in a neighborhood of $(\gamma_0,v_0)$. Observe that by the definition of projection families induced by group actions, we have $\Pi(\gamma \gamma_0^{-1},\gamma_0 v)=\pi((\gamma \gamma_0^{-1})\gamma_0 v) = \pi(\gamma v )=\Pi(\gamma, v) $. Hence, the family $\Pi$ is transversal in a neighborhood of $(\gamma_0,v_0)$.

We now show that (3) implies (2). Suppose we are interested in checking the transversality condition \eqref{eq:transGeneral} for a triple $(\gamma_0,  v,  w)$. By Lemma \ref{lemma:transversalitycompositionquantitative}, it suffices to check transversality 
for the family $\tilde \Pi(\gamma,v):=\Pi(\gamma \gamma_0^{-1},\gamma_0 v)$ at the corresponding triple $(\id, \gamma_0 v, \gamma_0 w)$.  Furthermore, since the distortion of the transversality condition in Lemma \ref{lemma:transversalitycompositionquantitative} vanishes when the variable $\gamma$ approaches the identity, transversality of $\Pi$ for triples of the form $(\id,  v,  w)$ in fact implies transversality of $\Pi$ for triples of the form $(\gamma,  v,  w)$ ($v,w\in U$) for $\gamma$ in neighborhood $V$ of $\id$ in $\Gamma$. Hence $V\times U\subset \Gamma\times N$ is the neighborhood for (2).
\end{proof}

Lastly, we show that, for projection families induced by group actions, it suffices to check transversality on a smaller family that is induced by the action of a closed subgroup.
\begin{prop}
\label{prop:enlargedFamily}
Let $G$ be a Lie group acting on a manifold $N$, $\Gamma\subset G$ a closed Lie subgroup of $G$, $M$ a manifold,
and $\pi: N\rightarrow M$ a smooth projection. If the projection family $\Pi: \Gamma\times N\rightarrow M$ given by $\Pi(\gamma,p)=\pi(\gamma (p))$ is locally transversal, then so is the projection family $\tilde \Pi: G \times N \rightarrow M$ given by $\tilde \Pi(g,p)=\pi(g (p))$.
\end{prop}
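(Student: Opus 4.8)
The plan is to reduce the entire question to the identity element of $G$ via Lemma \ref{lemma:thetaZero}, and then to observe that enlarging the symmetry group merely appends columns to the parameter-Jacobian appearing in \eqref{eq:transGeneral}, which can only increase the Gram determinant $\rho(\Diff_\lambda\Phi)=\det[\Diff_\lambda\Phi(\Diff_\lambda\Phi)^\mathsf{T}]^{1/2}$. Concretely: since $\Pi$ is induced by the $\Gamma$-action and is locally transversal, condition (3) of Lemma \ref{lemma:thetaZero} furnishes a neighborhood $U\subset N\setminus S_0$ and a constant $C>0$ such that \eqref{eq:transGeneral} holds for every triple $(\id,v,w)$ with $v,w\in U$. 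Because $\tilde\Pi$ is itself induced by the action of the larger group $G$, the same lemma applied to $\tilde\Pi$ reduces the problem to exhibiting such a $U$ and $C$ for $\tilde\Pi$; I will show that the identical $U$ and $C$ work.

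The essential geometric input is the choice of coordinates on the parameter space. Because $\Gamma$ is a closed, hence embedded, Lie subgroup, the exponential map provides a slice chart at $\id$: choosing a basis of $T_{\id}G$ whose first $\dim\Gamma$ vectors span $T_{\id}\Gamma$ realizes $\Gamma$, in the resulting exponential coordinates on $G$, as the linear slice on which the last $\dim G-\dim\Gamma$ coordinates vanish, while the given coordinates on $\Gamma$ are precisely the restriction of the first $\dim\Gamma$ coordinates on $G$. Working in these charts on the parameter spaces, together with the shared charts on $N$ and $M$, the defining identity $\tilde\Pi(\gamma,p)=\pi(\gamma p)=\Pi(\gamma,p)$ for $\gamma\in\Gamma$ shows that at every triple $(\id,v,w)$ one has $\tilde\Phi(\id,v,w)=\Phi(\id,v,w)$, and that the first $\dim\Gamma$ columns of $\Diff_\lambda\tilde\Phi(\id,v,w)$ coincide exactly with the columns of $\Diff_\lambda\Phi(\id,v,w)$.

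The linear-algebra step is then immediate. Writing $\Diff_\lambda\tilde\Phi=[\Diff_\lambda\Phi \mid E]$ for the block $E$ of the remaining columns, we have $\Diff_\lambda\tilde\Phi(\Diff_\lambda\tilde\Phi)^\mathsf{T}=\Diff_\lambda\Phi(\Diff_\lambda\Phi)^\mathsf{T}+EE^\mathsf{T}$, and since $EE^\mathsf{T}$ is positive semidefinite, Weyl's eigenvalue monotonicity gives $\det[\Diff_\lambda\tilde\Phi(\Diff_\lambda\tilde\Phi)^\mathsf{T}]\geq\det[\Diff_\lambda\Phi(\Diff_\lambda\Phi)^\mathsf{T}]$, that is $\rho(\Diff_\lambda\tilde\Phi)\geq\rho(\Diff_\lambda\Phi)$. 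Consequently, for any triple $(\id,v,w)$ with $v,w\in U$: if $|\tilde\Phi(\id,v,w)|=|\Phi(\id,v,w)|\leq C$, then transversality of $\Pi$ yields $\rho(\Diff_\lambda\Phi(\id,v,w))\geq C$, whence $\rho(\Diff_\lambda\tilde\Phi(\id,v,w))\geq C$. This is exactly condition (3) of Lemma \ref{lemma:thetaZero} for $\tilde\Pi$ with the same $U$ and $C$, so applying that lemma to $\tilde\Pi$ gives its local transversality and completes the proof.

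I expect the only genuinely delicate point to be the coordinate compatibility in the second step: one must use coordinates on $G$ that restrict to the given coordinates on $\Gamma$, so that the $\Gamma$-Jacobian literally appears as a column-submatrix of the $G$-Jacobian. The embeddedness guaranteed by the closedness of $\Gamma$ is precisely what makes such adapted slice charts available. Everything else is bookkeeping, since the transversality condition is evaluated only at the identity, where the left-hand side of \eqref{eq:transGeneral} is unchanged by passing from $\Pi$ to $\tilde\Pi$.
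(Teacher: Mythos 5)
Your proposal is correct and follows essentially the same route as the paper: reduction to the identity via Lemma \ref{lemma:thetaZero}, slice coordinates adapted to the closed subgroup $\Gamma$ (the paper gets these from the quotient manifold theorem for $G/\Gamma$, you from an adapted exponential chart -- equivalent in effect), and the observation that appending the extra $G$-columns to the parameter Jacobian can only increase the Gram determinant in \eqref{eq:transGeneral}. The only real divergence is in that last linear-algebra step, where you prove $\det\bigl[(A\;B)(A\;B)^\mathsf{T}\bigr]\geq\det\bigl[AA^\mathsf{T}\bigr]$ via positive semidefiniteness of $BB^\mathsf{T}$ and Weyl eigenvalue monotonicity, while the paper argues geometrically through volume distortion and projection; both arguments are valid precisely because the matrices are real (the paper's remark about failure over $\C$ applies equally to your argument, since $BB^\mathsf{T}$ need not be positive semidefinite for complex $B$), and yours has the small additional merit of handling all target dimensions at once rather than treating $m=1$ separately.
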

\begin{proof}
We first prove the theorem in the case of a one-dimensional target (which is the only case we will use). 

By Lemma \ref{lemma:thetaZero}, it suffices to prove transversality at the identity. That is, we need to show that for some $C>0$:
 \begin{equation}
 \label{eq:gTransversality}
 \text{ if \ }
 \frac{| \pi(  v) - \pi( w)|}
      {\norm{ v- w}}\leq C,
 \text{ \  then \  }\left|\dm_g\vert_{g=\id}
 \frac{\pi( g v) - \pi(g w)}
      {\norm{ v- w}}
 \right|\geq C.
\end{equation}

Now, since $\Gamma\subset G$ is a closed Lie subgroup, by the Homogeneous Space Construction Theorem and Quotient Manifold Theorem  \cite[Theorems 21.17 and 21.10]{Lee2013} we have that the quotient space $H=G/\Gamma$ (consisting of $\Gamma$-cosets of $G$) is a manifold; indeed, around any point of $G$ there is a chart on $G$ with coordinates $(\vec x, \vec y)$ such that $\vec x$ represents points in $\Gamma$ and $\vec y$ represents points in $H$. Using such a coordinate chart at the identity of $G$, we see $G$ locally as a product of $\Gamma$ and $H$; in particular, taking $0_H$ to be the $\Gamma$-coset passing through the identity of $G$, we can write $\id_G=(\id_\Gamma, 0_H)$. We can then vary $g$ either along $\Gamma$ or along $H$, so that $\dm_g\vert_{g=\id}$ can be decomposed as $(d\gamma\vert_{\gamma=\id_\Gamma}, dh\vert_{h=0_H})$. By transversality of the $\Gamma$-based projection family $\Pi$, we know that $\left|\dm_\gamma\vert_{\gamma=\id_\Gamma}
 \frac{\pi( \gamma v) - \pi(\gamma w)}
      {\norm{ v- w}}
 \right|\geq C
$, which then immediately gives the then-side of \eqref{eq:gTransversality}, completing the proof for one-dimensional targets.

For higher-dimensional targets, one uses the more general Definition \ref{defin_transversality_Euclidean} of transversality, and end of the proof relies on the following lemma from linear algebra.
\end{proof}

\begin{lemma}
Let $M=(A\; B)$ be a real block matrix. Then $\det MM^t \geq \det AA^t$.
\end{lemma}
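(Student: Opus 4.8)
The plan is to reduce the statement to the elementary linear-algebra fact that adding a positive semidefinite matrix can only increase the determinant. First I observe that, writing $M=(A\; B)$ for the horizontal concatenation of an $m\times k_1$ matrix $A$ and an $m\times k_2$ matrix $B$, the transpose $M^\mathsf{T}$ stacks $A^\mathsf{T}$ on top of $B^\mathsf{T}$, so that
\[
MM^\mathsf{T}=AA^\mathsf{T}+BB^\mathsf{T}.
\]
Setting $P=AA^\mathsf{T}$ and $Q=BB^\mathsf{T}$, both are symmetric positive semidefinite $m\times m$ matrices (they are Gram matrices, precisely the kind of object controlling $\rho$ above), and the claim becomes $\det(P+Q)\geq \det P$.

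For the main step I would invoke the Cauchy--Binet formula, which fits naturally with the Gram-determinant viewpoint already in play. It expresses $\det(MM^\mathsf{T})$ as the sum, over all $m$-element subsets $S$ of the $k_1+k_2$ columns of $M$, of the squared minors $\det(M_S)^2$, where $M_S$ is the $m\times m$ submatrix on the columns indexed by $S$. Restricting to those subsets $S$ lying entirely among the first $k_1$ columns (the columns of $A$) and applying Cauchy--Binet to $A$ alone shows that these terms sum to exactly $\det(AA^\mathsf{T})$. Since every remaining term $\det(M_S)^2$ is nonnegative, discarding them only decreases the sum, giving $\det(MM^\mathsf{T})\geq \det(AA^\mathsf{T})$, as desired.

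Alternatively, and avoiding the combinatorial bookkeeping, I could argue directly on $P$ and $Q$: if $P$ is singular then $\det P=0\leq \det(P+Q)$, because $P+Q$ is positive semidefinite and hence has nonnegative determinant; and if $P$ is positive definite then factoring $P=P^{1/2}P^{1/2}$ gives
\[
\det(P+Q)=\det(P)\,\det\!\big(I+P^{-1/2}QP^{-1/2}\big),
\]
where $P^{-1/2}QP^{-1/2}$ is positive semidefinite, so its eigenvalues are nonnegative, the eigenvalues of $I+P^{-1/2}QP^{-1/2}$ are at least $1$, and the second determinant is therefore $\geq 1$.

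I do not anticipate a genuine obstacle here, as this is a standard fact; the only point requiring a little care is the degenerate case where $AA^\mathsf{T}$ is singular. The Cauchy--Binet route sidesteps this entirely, since it never divides by $\det(AA^\mathsf{T})$ and handles all shapes of $A$ and $B$ uniformly (when $k_1<m$ the relevant subsum is simply empty and equals $\det(AA^\mathsf{T})=0$), which is why I would favor it; the congruence argument instead dispatches the singular case by the separate observation that a positive semidefinite matrix has nonnegative determinant.
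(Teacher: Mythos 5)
Your proof is correct, but it takes a genuinely different route from the paper's. The paper argues geometrically: writing $\mu=M^t$ and $\alpha=A^t$, it interprets $\det(\mu^t\mu)^{1/2}$ and $\det(\alpha^t\alpha)^{1/2}$ as volume-distortion factors of the linear maps $\mu$ and $\alpha$, and then observes that $\alpha(E)=\pi(\mu(E))$ for the coordinate projection $\pi:\R^{a+b}\to\R^a$, which is $1$-Lipschitz and hence cannot increase Hausdorff measure; the inequality between determinants follows. Your argument is instead purely algebraic: either Cauchy--Binet, where $\det(MM^\mathsf{T})$ is a sum of squared maximal minors and the subsum over columns of $A$ equals $\det(AA^\mathsf{T})$, or the decomposition $MM^\mathsf{T}=AA^\mathsf{T}+BB^\mathsf{T}$ together with monotonicity of the determinant under adding a positive semidefinite matrix. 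Both of your variants are complete and handle the degenerate cases correctly. What each approach buys: the paper's proof fits the geometric spirit of the article and needs no matrix identities, only the area formula and Lipschitz behavior of Hausdorff measure. Your proof is more self-contained and, notably, pinpoints exactly where realness of the matrices enters --- the nonnegativity of the squared minors $\det(M_S)^2$, respectively the positive semidefiniteness of $BB^\mathsf{T}$. This is relevant because the paper follows the lemma with a remark that the statement fails over $\C$ (take $A=1$, $B=\ii$, so $AA^t+BB^t=0$) and asserts that this ``explains the need for a geometric argument''; your proof shows that claim is too strong, since an algebraic argument exploiting real positivity suffices, and it makes transparent why the complex counterexample does not contradict it (over $\C$ the squares $\det(M_S)^2$ need not be nonnegative, and $BB^\mathsf{T}$, as opposed to $BB^*$, need not be positive semidefinite).
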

\begin{proof}
Let $\alpha = A^t:\R^c \rightarrow \R^a$, $\beta=B^t:\R^c \rightarrow \R^b$, and $\mu =\twovector{\alpha}{\beta}=M^t:\R^c \rightarrow \R^{(a+b)}$. We need to show $\det \mu^t \mu \geq \det \alpha^t \alpha$.  It is a standard fact\footnote{For square matrices, this is can be seen by decomposing $T$ into a product of elementary matrices; while for non-square matrices this can be seen by an appropriate change of coordinates. 
}
that for any matrix $T:\R^c\rightarrow \R^d$, the quantity $\det T^t T$ is the distortion of the $c$-dimensional volume element. Thus, we are trying to show that for any Borel set $A\subset \R^c$, $\Haus_c(\mu(A))\geq \Haus_c(\alpha(A))$. This follows immediately from the fact that $\alpha(A)=\pi(\mu(A))$ for the standard projection $\pi: \R^{(a+b)}\rightarrow \R^a$.
\end{proof}

\begin{remark}
The lemma fails over $\C$ if we take $A=1$ and $B=\ii$, so that $AA^t+BB^t=0$. This explains the need for a geometric argument.
\end{remark}

To illustrate Proposition \ref{prop:enlargedFamily}, consider the following simple case:
\begin{example}
Let $N=\R^2$ be Euclidean space, and $\pi:\R^2\rightarrow \R$ given by $\pi(x,y)=x$. It is well-known that the family of projections associated with the group $\Gamma=O(2)$ of rotations around the origin is transversal. One can enlarge $\Gamma$ to several reasonable choices of $G$, such as the isometry group $\operatorname{Isom}(\R^2)$ which adds translations, the similarity group $\operatorname{Sim}(\R^2)$ which adds aspect-preserving dilations, the linear group $GL(2,\R)$, or the group of affine motions $\operatorname{Aff}(\R^2)$ that includes all above motions. By Proposition \ref{prop:enlargedFamily}, each of these groups will give rise to transversal families of projections.
\end{example}

\section{M\"obius transformations}
\label{sec:season1}

In this section, we prove Theorems \ref{thm:MobAllProjections} and \ref{thm:MobSpecialProjections} concerning families of projections induced by subgroups of $\Mob$.  We then apply Theorem \ref{thm:MobSpecialProjections} to specific projection families.  Theorem \ref{thm:MobAllProjections} follows immediately from Proposition \ref{prop:enlargedFamily} and either Theorem \ref{thm:MobSpecialProjections} or the well-known transversality of the family of orthogonal projections onto lines in $\R^2$.

To prove  Theorem \ref{thm:MobSpecialProjections}, we will analyze transversality more carefully for an arbitrary one-dimensional subgroup $\Gamma\subset \Mob$. We will think of $\Gamma$ as the image under the exponential map of a line in the Lie algebra of $\Mob$. To this end, we identify $\Mob$ with $SL(2,\C)$ and its Lie algebra with the algebra $\mathfrak{sl}(2,\C)$ of traceless 2-by-2 complex matrices.

\begin{theorem}
\label{thm:MobTransversality}
Let $\Gamma\subset \Mob$ be an arbitrary one-dimensional Lie subgroup, parametrized as $\Gamma=\{\gamma_t=\exp(At):t\in\R\}$ for some non-zero $A=(a_{ij})\in \mathfrak{sl}(2,\C)$. Define:
\begin{itemize}[itemsep=2pt,topsep=1pt]
\item \ $S_0=\{\infty\}\subset \hat \C$ and $S=\{(\gamma, z): \gamma(z)=\infty\}\subset \Gamma \times \hat \C$.

\item \ $L_0=\{z:\Im(a_{11}-a_{21}z) = 0\}\subset \C$ and $L=\{(\gamma, z):\gamma(z)\in L_0\}\subset \Gamma \times \hat \C$.

\item \ $K_0= L_0\cup S_0\subset \hat \C$ and $K= L\cup S\subset \Gamma\times \hat \C$.
\end{itemize}
Then the projection family $\Pi:\Gamma\times \hat \C\rightarrow \R$ given by $(\gamma, z)\mapsto \Re(\gamma(z))$ is defined on the domain $(\Gamma\times \hat \C)\setminus S$ and is locally transversal on $(\Gamma\times \hat \C)\setminus K$.
\end{theorem}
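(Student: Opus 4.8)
The plan is to reduce transversality to an infinitesimal computation at the identity and then to read off the non-transversality locus directly from that computation. We are in the case $m=k=1$ (target $\R$ and parameter $\Gamma$ are one-dimensional, source $\hat\C$ is two-dimensional), so the relevant condition is \eqref{eq:trans_m1k1}. First I would apply Lemma \ref{lemma:thetaZero}, but with the \emph{enlarged} singular set: since $L_0$ is closed (a line, all of $\C$, or empty, depending on $a_{21}$ and $\Im a_{11}$) and $S_0=\{\infty\}$ is closed, the set $K_0=L_0\cup S_0$ is closed, and $K=\{(\gamma,z):\gamma(z)\in K_0\}$ is exactly the singular set associated to $K_0$ by the group action. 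Thus $\Pi$, restricted to $(\Gamma\times\hat\C)\setminus K$, is again a $C^\infty$ projection family induced by the smooth action of $\Gamma$ on $\hat\C$, now with singular set $K$, and by Lemma \ref{lemma:thetaZero} it suffices to verify \eqref{eq:trans_m1k1} at $\gamma=\id$ in a neighborhood of each base point $p_0\in\hat\C\setminus K_0$. Since $p_0\notin S_0$ we have $p_0\in\C$, so we may work in the standard chart on $\hat\C$.

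Next I would compute the infinitesimal generator of the flow. Writing $\gamma_t=\exp(At)$ and differentiating the M\"obius action at $t=0$ (using $a_{22}=-a_{11}$ since $A\in\mathfrak{sl}(2,\C)$) yields the quadratic vector field $V(z)=-a_{21}z^2+2a_{11}z+a_{12}$. Since $V(v)-V(w)=(v-w)\big(2a_{11}-a_{21}(v+w)\big)$, setting $u=(v-w)/|v-w|$ and $Q=2a_{11}-a_{21}(v+w)$ gives, at $\gamma=\id$,
\[
\Phi(0,v,w)=\Re(u),\qquad \dm_t\Phi(0,v,w)=\Re(u\,Q).
\]
The crucial observation is that $\Im Q=2\,\Im\!\big(a_{11}-a_{21}\tfrac{v+w}{2}\big)$ vanishes exactly when the midpoint $\tfrac{v+w}{2}$ lies on $L_0$; together with the fact that $|\Phi(0,v,w)|=|\Re u|$ is small precisely when $v-w$ is nearly vertical (so $u\approx\pm\ii$), this is the source of non-transversality and explains the exclusion of $L$.

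The verification of \eqref{eq:trans_m1k1} is then elementary. The hypothesis $|\Re u|\le C$ forces $|\Im u|\ge\sqrt{1-C^2}$, so that, using $\Re(uQ)=\Re(u)\Re Q-\Im(u)\Im Q$,
\[
|\dm_t\Phi(0,v,w)|\ge |\Im u|\,|\Im Q|-|\Re u|\,|\Re Q|\ge \sqrt{1-C^2}\,|\Im Q|-C\,|\Re Q|.
\]
Here the local nature of transversality is essential: choosing $U=B(p_0,r)$ small enough that $\overline U\cap L_0=\varnothing$ (possible since $L_0$ is closed and $p_0\notin L_0$), the midpoint of any $v,w\in U$ again lies in $U$ by convexity, so $|\Im Q|\ge\delta>0$ uniformly on $U$, while $|\Re Q|\le M<\infty$ on the bounded set $U$. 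As $C\to0$ the right-hand side tends to $\delta>0$, so for $C$ small it exceeds $C$, establishing \eqref{eq:trans_m1k1}. The degenerate cases $a_{21}=0$ are covered by the same computation: if $\Im a_{11}\neq0$ then $L_0=\varnothing$ and $|\Im Q|=2|\Im a_{11}|>0$ everywhere, whereas if $\Im a_{11}=0$ then $L_0=\C$, $K_0=\hat\C$, and the domain $(\Gamma\times\hat\C)\setminus K$ is empty.

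I expect the main obstacle to be conceptual rather than computational: one must recognize that transversality \emph{genuinely} fails along $L$ — precisely where $v-w$ is vertical and the midpoint lies on $L_0$, forcing both $\Phi$ and $\dm_t\Phi$ to vanish — and then check that local transversality nonetheless survives off $L$. The point that makes this work is the convexity observation above, which keeps $\Im Q$ bounded away from zero on small neighborhoods of any $p_0\notin L_0$, together with the correct bookkeeping of the reduction to the identity via Lemma \ref{lemma:thetaZero} applied to the enlarged singular set $K_0$.
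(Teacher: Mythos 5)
Your proposal is correct and takes essentially the same route as the paper: reduction to the identity via Lemma \ref{lemma:thetaZero} applied with the enlarged closed set $K_0$ (exactly as the paper does), linearization of the action at $t=0$ giving the quadratic field $a_{12}+2a_{11}z-a_{21}z^2$, and an estimate showing that when $|\Re u|\le C$ the $t$-derivative is dominated by a term proportional to the distance of the base points from $L_0$, minus errors that vanish as $C$ and the neighborhood shrink. Your complex-variable packaging via $V(v)-V(w)=(v-w)\bigl(2a_{11}-a_{21}(v+w)\bigr)$ together with the midpoint/convexity observation is a cleaner rendering of the paper's real-coordinate expansion in $\Delta x,\Delta y$ (which instead evaluates the key quantity at an endpoint $w$ and absorbs the discrepancy into a $\diam(U)$ term), but the substance of the argument is identical.
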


\noindent For a geometric description of the sets $L_0$ and $K_0$, see the proof of Theorem \ref{thm:MobSpecialProjections} below.

\begin{proof}
Linearizing at $t=0$, we have $\gamma_t=\id+At+O(t^2)=\begin{pmatrix}
1+a_{11}t & a_{12}t \\
a_{21}t & 1-a_{11}t
\end{pmatrix}+O(t^2)$ and 
\begin{equation}
    \Pi(t,z)= \Re\left(\frac{(1+a_{11}t)z+a_{12}t+O(t^2)}{a_{21}tz+(1-a_{11}t)+O(t^2)}\right).
\end{equation}
Noting that $\Re$ commutes with differentiation with respect to the real variable $t$, we have
\begin{equation}\label{eq:PiasRe}
    \dm_t \vert_{t=0} \Pi(t,z) = \Re \left(a_{12} +2 a_{11}z - a_{21}z^2 \right).
\end{equation}

In order to show locally transversality away from $K$, by Lemma \ref{lemma:thetaZero}, it suffices to check the transversality condition \eqref{eq:trans_m1k1} at $t=0$ locally for $(v,w)$, that is: around every $(v_0,w_0)\in \C^2$ with $v_0,w_0\notin K_0$ there exists a neighborhood $U\subset\C^2$ and a constant $C>0$ such that  
\begin{equation}\label{eq:transvinproof}
    \text{ if }\frac{\norm{\Psi(0,v,w)}}{\norm{v-w}}<C \text{ then }\frac{\norm{\dm_t \vert_{t=0} \Psi(t,v,w)}}{\norm{v-w}}>C
\end{equation}
for all $v,w\in U$, where for $\Psi(t,v,w):=\Pi(t,v)-\Pi(t,w)$.

Let $(v_0,w_0)\in \C^2$, $U\subset \C^2$ an open neighborhood around $(v_0,w_0)$, $C>0$ a small positive constant, and $(v,w)\in U$.
We write $v$ and $w$ in terms of their real and imaginary parts: $v=(x+\Delta x)+(y+\Delta y)\ii$ and $w=x+y\ii$, and note that $\Psi(0,v,w)=\Re(v-w)=\Delta x$.
Assume that the if-side of \eqref{eq:transvinproof} holds, that is, assume that \begin{equation}
    \label{eq:deltax}\frac{\norm{\Delta x}}{\norm{v-w}}<C.
\end{equation} Let $C_1>0$ such that  $C_1^{-1}(|\Delta x|+|\Delta y|) \leq|v-w|\leq C_1 (|\Delta x|+|\Delta y|)$ for all $(v,w)\in U$ (i.e. $C_1$ is the local bi-Lipschitz constant for Euclidean norm and the $1$-norm). Hence it follows that \begin{equation} \label{eq:deltay}\frac{\Delta y}{|v-w|} \geq \frac{1-C}{C_1}. \end{equation}

First applying \eqref{eq:PiasRe} and then writing out all parts in terms of $x,y,\Delta x, \Delta y$, we have
\begin{align}
  \label{pretermdots}  \frac{\norm{\dm_t \vert_{t=0} \Psi(t,v,w)}}{\norm{v-w}} &=\norm{\Re \frac{(a_{12} +2 a_{11}v - a_{21}v^2) - (a_{12} +2 a_{11}w - a_{21}w^2)}{\norm{v-w}}}\\
    &=\norm{\Re \frac{2 a_{11}(v-w)  + a_{21}(-v^2 + w^2)}{\norm{v-w}}}\\ \label{termdots}
    & = \norm{\Re \left( p(x,y,\Delta x, \Delta y) \right)\frac{\Delta x}{\norm{v-w}}  + \Re ( 2 a_{11}\ii  - a_{21}(2 w\ii - \Delta y))\frac{\Delta y  }{\norm{v-w}}}.
\end{align}
where $p(x,y,\Delta x, \Delta y)$ is a polynomial in the variables $x,y,\Delta x$ and $\Delta y$. Hence the absolute value of $p$ is bounded from above by a finite constant $C_2>0$ depending on the neighborhood $U$. Combining this with \eqref{eq:deltax} and \eqref{eq:deltay} yields
\begin{align}\label{eq:preestimate}  \frac{\norm{\dm_t \vert_{t=0} \Psi(t,v,w)}}{\norm{v-w}}&\geq \frac{1-C}{C_1}\left|\Re( 2 a_{11}\ii  - a_{21}(2 w\ii)) - \Delta y)\right| -C_2C\\ \label{eq:estimate} &\geq
\frac{1-C}{C_1}\left|\Re( 2 a_{11}\ii  - a_{21}(2 w\ii)) )\right|- \frac{1-C}{C_1}\diam(U) -C_2C. \end{align}
Assuming that $v_0,w_0\notin S_0$ and that  $\diam U$ and $C$ are both  sufficiently small, we have
\[\frac{\norm{\dm_t \vert_{t=0} \Psi(t,v,w)}}{\norm{v-w}}\geq C\]
for all $(v,w)\in U$, as desired.

Now it remains to prove that transversality fails on $K$. It clearly suffices to consider a finite point $w_0\in K_0$. Notice that all computations and estimates up until equation \eqref{eq:estimate} are still valid under this assumption on $w_0$, and choose $w=w_0$, $v=w_0+\Delta y \ii$ (i.e.\ $\Delta x=0$). By the definition of $K_0$ and \eqref{termdots} it follows that
\[\frac{\norm{\dm_t \vert_{t=0} \Psi(t,v,w)}}{\norm{v-w}}=
\norm{\frac{ a_{21}(\Delta y)^2 }{\norm{v-w}}}\leq |a_{21}||\Delta y|,\]
which for small $\Delta y$ cannot be bounded away from zero and transversality fails, unless $a_{21}=0$. In this remaining case, we either have that $\Im(a_{11})\neq 0$ in which case $L_0=\emptyset$, or $\Im(a_{11})=0$, in which case $\Gamma$ acts by translations and transversality clearly fails on $L_0=\C$.
\end{proof}

We can now derive Theorem \ref{thm:MobSpecialProjections} from Theorem \ref{thm:MobTransversality}:
\begin{proof}[Proof of Theorem \ref{thm:MobSpecialProjections}]
Combining Theorem \ref{thm:MobTransversality} and Theorem \ref{thm:PS_n}, projection theorems hold wherever we have $\Im(a_{11}-a_{21}z)\neq 0$, where $A=(a_{ij})\in  \mathfrak{sl}(2,\C)$ is the Lie algebra of~$\Gamma$. It remains to describe this set geometrically.

If $a_{21}=0$, then $\Gamma$ preserves $\infty$. If, furthermore, $\Im(a_{11})\neq 0$, $\Im(a_{11}-a_{21}z)\neq 0$ holds for all $z$ and we obtain transversality on all of $\Gamma\times \C$. Conversely, if $\Im(a_{11})=0$, then transversality fails everywhere; exponentiating the matrix explicitly, one sees that the group is, in fact, either a translation $z\mapsto z+a_{12}t$ if $a_{11}=0$ or a dilation centered at $-a_{12}/(2a_{11})$ if $a_{11}\neq0$. So projection theorems fail everywhere.

If $a_{21}\neq 0$, then the set $L_0=\{z:\Im(a_{11}-a_{21}z)=0\}$ is a line, and one shows using the linearization of $\gamma_t$ that $L_0$ is tangent to the infinite orbit $\Gamma(\infty)$ at $t=0$, see Figure \ref{fig:infiniteTangent}.

In most cases, projection theorems can be recovered despite the lack of transversality along $L_0$. There are three cases:
\begin{enumerate}[label={(\arabic*)\ }, itemsep=2pt,topsep=1pt]
    \item (bad case) $L_0$ is a vertical line and $K_0=\Gamma(\infty)$. In this case, projection theorems fail for subsets of $L_0$ since they are inevitably projected to a single point, but hold elsewhere.
    \item (good case) $L_0$ is a non-vertical line and $K_0=\Gamma(\infty)$. In this case, the restriction of the projection to $L_0$ is a similarity mapping, so Hausdorff dimension and positivity of the Hausdorff measure are preserved by the projection $\pi$ along $L_0$.
    \item (artifact case) $K_0\neq \Gamma(\infty)$. In this case, any  set of sufficiently small diameter inside $L_0$ will be moved away from $L_0$ by $\gamma_t$ after some time. Indeed, the orbits of $\Gamma$ are analytic away from their endpoints and therefore cannot overlap $K_0$ in a relatively-open set without forcing $K_0$ to be an orbit. Projection theorems following from transversality then apply.
\end{enumerate}
This completes the proof of Theorem \ref{thm:MobTransversality}.
\end{proof}

We finish the section with some examples of M\"obius projection families.

\begin{example}
Classical projection theory focuses on the group ${O(2)=\{z\mapsto e^{\ii t}z:t\in\R\}}$ corresponding to Lie algebra element $A=\begin{pmatrix}
\ii/2 & 0 \\
0 & -\ii/2
\end{pmatrix}$. We recover the well-known transversality result for this family. 
\end{example}

\begin{example}
Compact one-parameter families $\Gamma$ of $\Mob$ are conjugate to $O(2)$ by some $M\in SL(2,\C)$ (one sees this by putting the generator $A$ of $\Gamma$ in Jordan canonical form). The group $\Gamma$ then fixes two points, $M(0)$ and $M(\infty)$. If one of these is infinite, then $\Gamma$ simply rotates around the other point. If both are finite, we may use a translation and dilation to normalize the two points to lie on the unit circle, see the left side of Figure \ref{fig:twoNormalizations}. Transversality fails along the linear orbit of $\infty$, but projection theorems are recovered as long as the orbit is not vertical. Conjugating the projection family by $M$ yields a ``circular point-source projection'' shown on the right in Figure \ref{fig:twoNormalizations}.
\end{example}

\begin{figure}
    \centering
    \hfill{}
    \includegraphics[width=.35\textwidth]{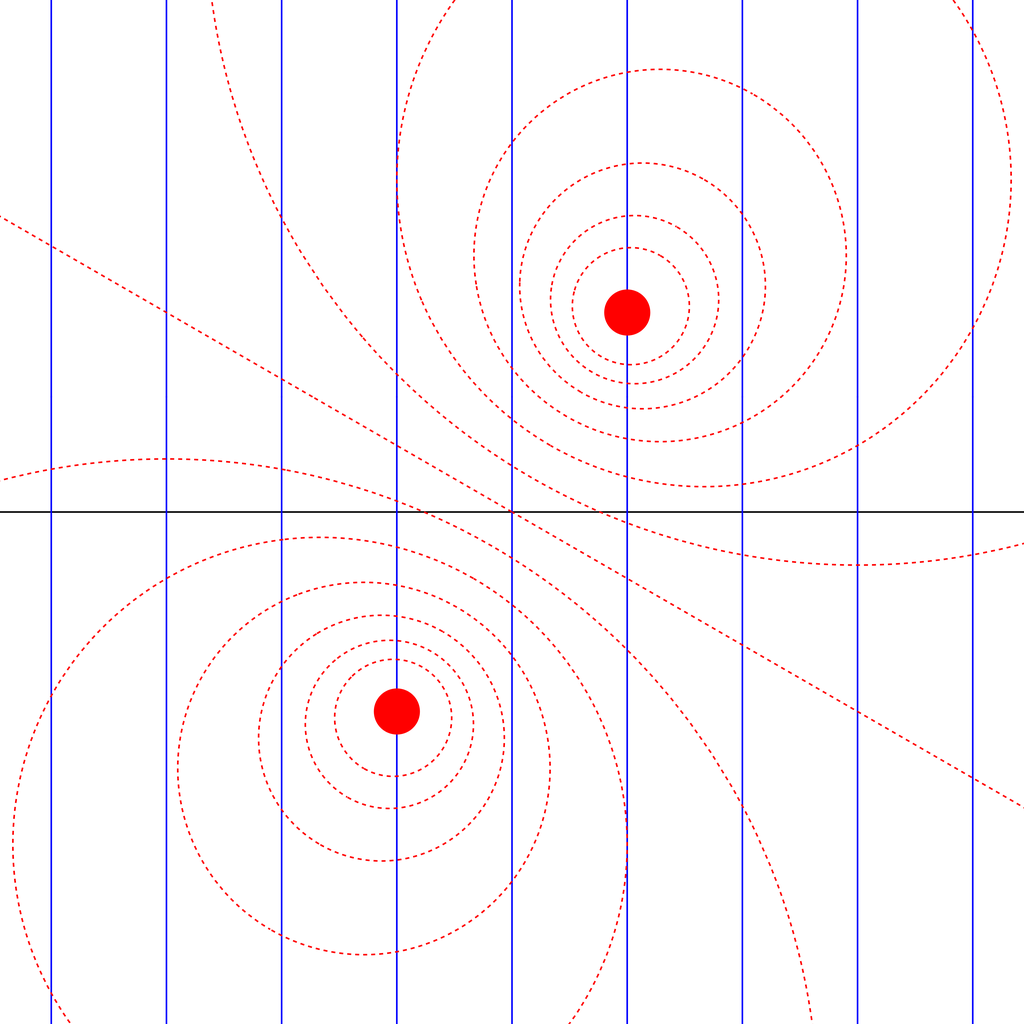}
    \hfill{}
    \includegraphics[width=.35\textwidth]{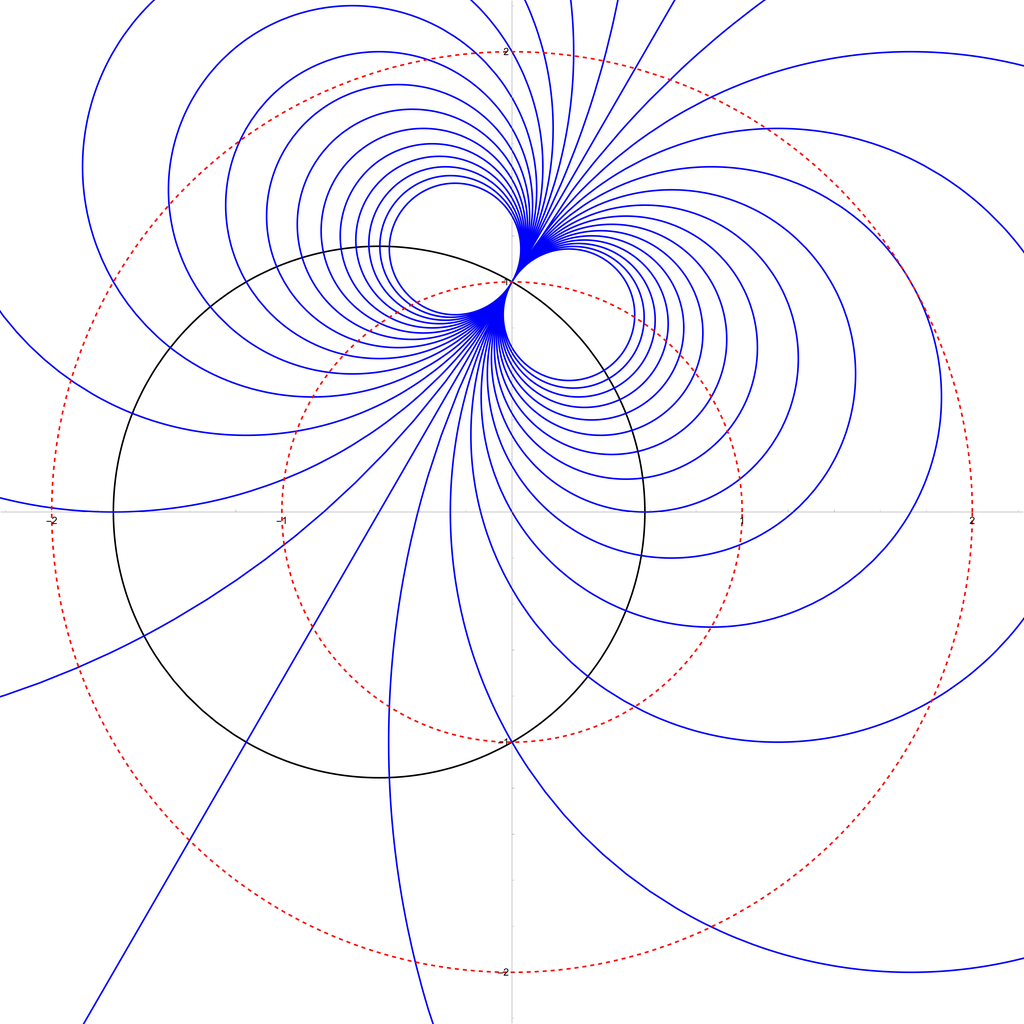}
    \hfill{}
    \caption{Two ways of seeing a M\"obius motion-projection family. In the two pictures, the same projection is shown via its target (black) and fibers (blue). The rotation family is illustrated via its orbits (dashed red). In the picture on the left, the projection is normalized to $(x,y)\mapsto (x,0)$ (at the expense of complicating the motions). In the picture on the right, the rotation family is normalized to $z\mapsto e^{\ii \theta}z$ (at the expense of complicating the projection). The two pictures are related by a M\"obius transformation that sends the red dots on the left to $0$ and $\infty$ on the right and the point $\infty$ to $(0,1)$.}
    \label{fig:twoNormalizations}
\end{figure}

\begin{example}
Lastly, consider a loxodromic motion, shown in Figure \ref{fig:infiniteTangent}. This motion is conjugate, by some $M\in \Mob$, to a mapping of the form $z\mapsto e^{(a+\ii b)t}z$. The orbit of~$\infty$ is non-linear in this case, but transversality nonetheless fails along its tangent line $T$ at~$\infty$. Projection theorems hold in this case, even if the linearization is vertical, since any subset of $T$ moves out of it under the action of $\Gamma$.
\end{example}

\section{Real projective transformations}
\label{sec:season2}

In this section, we repeat the analysis from Section \ref{sec:season1} within the framework of projective geometry. That is, we prove Theorems \ref{thm:RPAllProjections} and \ref{thm:RPSpecialProjections} and then apply Theorem \ref{thm:RPSpecialProjections} to specific projection families. Recall that our basic projection is now the mapping $\pi: (\RP^2\setminus \infty_Y) \rightarrow \RP^1=\R\cup\infty_X$, which restricts to $\R^2$ as the familiar projection $(x,y)\mapsto x$ and also sends the infinite points to $\infty_X$. Writing points of $\RP^2$ and $\RP^1$ in homogeneous coordinates\footnote{A homogeneous coordinate $(x:y:z)\in \RP^2=(\R^3\setminus \{0\})/\R^*$ corresponds either to a finite point $(x/z,y/z)$ if $z\neq 0$, or represents an infinite point if $z=0$. Unlike in the case of the Riemann sphere $\hat \C = \mathbb{CP}^1$, in $\RP^2$ there are infinitely many infinite points. We distinguish two infinite points of interest: $\infty_X=(1:0:0)$ and $\infty_Y=(0:1:0)$. Matrices in $GL(3,\R)$ act on homogeneous coordinates as they would on vectors.}, $\pi$ is given simply by $\pi(x:y:z)=(x:z)$.

Theorem \ref{thm:RPAllProjections} takes an additional consideration not needed for Theorem \ref{thm:MobAllProjections}: namely, the family $O(2)$ is transversal on $\R^2\subset\RP^2$, but not on $\RP^2\setminus \R^2$. Indeed, projection theorems fail for subsets of $\RP^2\setminus \R^2$.

\begin{proof}[Proof of Theorem \ref{thm:RPAllProjections}]
Since the family induced by $O(2)$ is transversal on $\R^2$, we obtain by Proposition \ref{prop:enlargedFamily} that the family induced by $GL(3,\R)$ is transversal on the set $\{(g, p): g^{-1}(p)\in \R^2\}$. 

Let $M=\begin{pmatrix}0&0&-1\\0&1&0\\1&0&0\end{pmatrix}\in GL(3,\R)$, which preserves the point $\infty_Y$ and sends the extended X-axis to itself. In particular, we have that $\pi$ commutes with $M$. Applying Lemma \ref{lemma:transversalitycomposition} to the group $\Gamma=MO(2)M^{-1}$, we have that the family induced by  $\Gamma$ is transversal away from $M(\RP^2\setminus \R^2)$, which is the closure $\hat Y$ of the Y-axis. Combining this with Proposition \ref{prop:enlargedFamily}, we have that the family induced by  $GL(3,\R)$ is transversal on the set $\{(g, p): g^{-1}(p)\in \RP^2 \setminus \hat Y\}$.

Combining the two transversality results, we have that $\Pi:GL(3,\R)\times\RP^2\rightarrow \RP^1$ is transversal away from the set $\{(g,p):g^{-1}p=\infty_Y\}$, as desired.
\end{proof}

The proof of  Theorem \ref{thm:RPSpecialProjections} is analogous to that of Theorem \ref{thm:MobSpecialProjections}, once we prove our next Theorem \ref{thm:RPTransversality}, which parallels Theorem \ref{thm:MobTransversality}. Recall that the Lie algebra $\mathfrak{gl}(3,\R)$  of $GL(3,\R)$ consists of arbitrary 3-by-3 real matrices.

\begin{theorem}
\label{thm:RPTransversality}
Let $\Gamma\subset GL(3,\R)$ be a one-dimensional Lie subgroup parametrized as $\Gamma=\{\gamma_t=\exp(At):t\in\R\}$ for some non-zero $A=(a_{ij})\in \mathfrak{gl}(3,\C)$.  Define:
\begin{itemize}[itemsep=2pt,topsep=1pt]
\item \ $S_0=\{\infty_Y\}\subset  \RP^2$ and $S=\{(\gamma, p): \gamma(p)=\infty_Y\}\subset \Gamma \times \RP^2$.
\item \ $L_0=
\begin{cases}
 \{(x,y): a_{32}x=a_{12}\}\subset \R^2 & \text{ if }a_{32}\neq 0\\
 \RP^2\setminus \R^2 & \text{ if }a_{32}=0 \text{ and }a_{12}\neq 0\\
 \RP^2 & \text{ if }a_{32}=0 \text{ and }a_{12}=0
\end{cases}$
\item \ $L=\{(\gamma, p):\gamma(p)\in L_0\}\subset \Gamma \times  \RP^2$.

\item \ $K_0= L_0\cup S_0\subset  \RP^2$ and $K= L\cup S\subset \Gamma\times \RP^2$.
\end{itemize}
Then the projection family $\Pi:\Gamma\times \RP^2\rightarrow \RP^1$ given by $(\gamma, p)\mapsto \pi(\gamma(p))$ is defined on the domain $(\Gamma\times \RP^2)\setminus S$ and is locally transversal on $(\Gamma\times \RP^2)\setminus K$.
\end{theorem}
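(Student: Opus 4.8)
The plan is to follow the proof of Theorem \ref{thm:MobTransversality} essentially line for line, replacing the complex computation with its real-projective analog. By Lemma \ref{lemma:thetaZero} it suffices to verify the one-dimensional transversality condition \eqref{eq:trans_m1k1} at $t=0$, locally in the pair $(v,w)$, and by Lemma \ref{lemma:transversalitycomposition} I may carry this out in coordinate charts. Working first in the affine chart $\R^2=\{(x:y:1)\}$ and linearizing $\gamma_t = I + At + O(t^2)$, the quotient rule applied to $\pi(x:y:z)=x/z$ gives the analog of \eqref{eq:PiasRe},
\[
\dm_t\vert_{t=0}\Pi(t,(x,y)) = D(x,y):=(a_{11}-a_{33})x + a_{12}y + a_{13} - a_{31}x^2 - a_{32}xy.
\]
The decisive structural feature, which has no counterpart in the M\"obius case and which makes everything transparent, is that $D$ is a polynomial of degree at most one in $y$, with $\partial_y D = a_{12}-a_{32}x$.

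For transversality off $K_0$ inside $\R^2$, I would write $v=(x+\Delta x,y+\Delta y)$, $w=(x,y)$, so that $\Psi(0,v,w)=\Delta x$ and the if-side of \eqref{eq:trans_m1k1} forces $|\Delta y|/|v-w|$ to be bounded below, exactly as in \eqref{eq:deltay}. Since $D$ is affine in $y$, the difference splits cleanly as
\[
\dm_t\vert_{t=0}\Psi(t,v,w) = D(v)-D(w) = P(x,y,\Delta x,\Delta y)\,\Delta x + (a_{12}-a_{32}x)\,\Delta y,
\]
with $P$ a polynomial, hence bounded on a small neighborhood. The estimate of \eqref{eq:preestimate}--\eqref{eq:estimate} then yields transversality at any base point with $a_{12}-a_{32}x\neq 0$ (and $\neq\infty_Y$). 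Failure on $L_0\cap\R^2$ is even cleaner than in the M\"obius setting, where a residual quadratic term survived: choosing $\Delta x=0$ with $w_0\in L_0$, affineness in $y$ gives $\dm_t\vert_{t=0}\Psi=(a_{12}-a_{32}x_0)\Delta y=0$ identically, so \eqref{eq:trans_m1k1} fails.

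The genuinely new ingredient, and the step I expect to be the main obstacle, is the behavior along the line at infinity $\RP^2\setminus\R^2$, which may itself belong to $K_0$. I would treat it by repeating the computation in the chart $\{x\neq 0\}$ with coordinates $(u,w)=(y/x,z/x)$ and the target chart near $\infty_X$; this chart covers the entire line at infinity except $\infty_Y\in S_0$. There the derivative becomes $D_\infty(u,w)=a_{31}+a_{32}u+(a_{33}-a_{11})w-a_{12}uw-a_{13}w^2$, now affine in $u$ with $\partial_u D_\infty = a_{32}-a_{12}w$, equal to $a_{32}$ on $\{w=0\}$. The same two arguments as above (the if-side now forcing the $u$-direction to dominate) show that transversality holds along the line at infinity if and only if $a_{32}\neq 0$, and fails identically there when $a_{32}=0$.

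Finally I would assemble the three branches in the definition of $L_0$ and check that they match. When $a_{32}\neq 0$, the finite computation gives $L_0=\{a_{32}x=a_{12}\}$ and the line at infinity is transversal. When $a_{32}=0$ but $a_{12}\neq 0$, transversality holds throughout $\R^2$ since $a_{12}-a_{32}x\equiv a_{12}\neq 0$, but fails on the whole line at infinity, so $L_0=\RP^2\setminus\R^2$. When $a_{32}=a_{12}=0$ — equivalently $Ae_2\in\vspan(e_2)$, i.e.\ $\Gamma$ fixes $\infty_Y$ — both coefficients vanish and transversality fails everywhere, giving $L_0=\RP^2$. These are precisely the three cases in the statement, so identifying $\partial_y D$ and $\partial_u D_\infty$ as the obstruction in the two charts, and reconciling them into the piecewise description of $L_0$, is the crux; the remaining estimates are routine copies of the M\"obius argument.
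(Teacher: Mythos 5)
Your proposal is correct, and for the finite chart it coincides with the paper's argument: same linearization, same derivative $a_{11}x+a_{12}y+a_{13}-x(a_{31}x+a_{32}y+a_{33})$, same use of Lemma \ref{lemma:thetaZero} to reduce to $t=0$, and the same splitting of the derivative difference into a bounded multiple of $\Delta x$ plus $(a_{12}-a_{32}x)\Delta y$, mirroring \eqref{eq:deltay}--\eqref{eq:estimate}. Where you diverge is the line at infinity: you redo the computation from scratch in the chart $\{x\neq 0\}$ with coordinates $(u,w)=(y/x,z/x)$, obtaining the obstruction $\partial_u D_\infty=a_{32}-a_{12}w$, whereas the paper instead conjugates $\Gamma$ by the matrix $M$ from the proof of Theorem \ref{thm:RPAllProjections} (which swaps the line at infinity with the closure of the $Y$-axis and commutes with $\pi$ up to a projective change of target), and then simply reuses the already-established finite-chart criterion for $M\Gamma M^{-1}$: the condition $-a_{32}-a_{12}x'\neq 0$ evaluated at the image point $(0,\pm y/x)$ gives $a_{32}\neq 0$, with Lemma \ref{lemma:transversalitycomposition} transporting transversality back. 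The two routes are mathematically equivalent -- your chart is, up to a coordinate swap, the conjugated picture -- but the paper's version avoids a second quotient-rule computation and keeps the group-theoretic structure in view, while yours is more self-contained and makes the cross-chart consistency $a_{32}-a_{12}w=-(a_{12}-a_{32}x)/x$ checkable directly. A further point in your favor: you explicitly verify \emph{failure} of transversality on $L_0$ (taking $\Delta x=0$ and using affineness in $y$ to get an identically vanishing derivative, cleaner than the M\"obius case where a residual quadratic term remained), whereas the paper's proof of Theorem \ref{thm:RPTransversality} only asserts the ``if and only if'' by analogy with Theorem \ref{thm:MobTransversality}; since the theorem claims transversality fails on $K$, your explicit argument closes a step the paper leaves implicit.
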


\begin{proof}
As in the proof of Theorem \ref{thm:RPAllProjections}, we will first check transversality at finite points following the steps of the proof of Theorem \ref{thm:MobSpecialProjections}. Then we check transversality at the infinite points by linking it to transversality at finite points.

We have $\exp(At)=\id+At+O(t^2)$, where $O(t^2)$ refers to higher-order terms in $t$, so that
\begin{equation}
    \Pi(t,(x,y))=\frac{(1+a_{11}t)x+a_{12}ty+a_{13}t+O(t^2)}{a_{31}tx+a_{32}ty+(1+a_{33})t+O(t^2)}
\end{equation} 
Differentiating at zero, we have
\[\dm_t\vert_{t=0}\Pi(t,(x,y)) = a_{11}x+a_{12}y+a_{13} - x(a_{31}x+a_{32}y+a_{33}).\]
We assume that the left side of the transversality condition hold for a pair of points $(x+\Delta x, y+\Delta y)$ with constant $C>0$. Hence, $\frac{\norm{\Delta x}}{\norm{(\Delta x, \Delta y)}}\leq C$ and $\frac{\norm{\Delta y}}{\norm{(\Delta x, \Delta y)}}$ is large in terms of $C$ (this is analogous to equations \eqref{eq:deltax} and \eqref{eq:deltay}). One can now compute and simplify $ \frac{\norm{\dm_t \vert_{t=0} \Psi(t,v,w)}}{\norm{v-w}}$ analogous to equations \eqref{pretermdots} through \eqref{termdots}. The analog of  the right-hand term here equals
\begin{align}
 (a_{12}-a_{32}x)\tfrac{\norm{\Delta y}}{\norm{(\Delta x, \Delta y)}}
\end{align}
The analog of left-hand term (containing  $ \frac{\norm{\Delta x}}{\norm{(\Delta x, \Delta y)}}$ as a factor) is small compared to $C$ and hence negligible (see equations \eqref{eq:preestimate} and \eqref{eq:estimate}).
Hence, in the finite part of $\RP^2$, we have local transversality at $t=0$ if and only if {$a_{12}-a_{32}x\neq 0$} in $\R^2\subset\RP^2$.

In order to check transversality on the infinite part of $\RP^2$, we conjugate $A$ and $\Gamma$ by the matrix $M$ used in the proof of Theorem \ref{thm:RPSpecialProjections}, giving
\[MAM^{-1}= \begin{pmatrix}{a_{33}}&-a_{32}&-a_{31}\\
-a_{23}&-a_{22}&a_{21}\\
-a_{13}&a_{12}&a_{11}\end{pmatrix}.
\]
The family associated to $M\Gamma M^{-1}$ is then transversal at points $(x',y')$ where $-a_{32}-a_{12} x'\neq 0$. By Lemma \ref{lemma:transversalitycomposition}, local transversality of $\Pi$ at a non-vertical infinite point with homogeneous coordinates $(x:y:0)$ is equivalent to local transversality of the family associated to $M\Gamma M^{-1}$ at $M(x:y:0)=(0:y:-x)=(0:-y/x:1)=(0,-y/x)$. Here, the transversality condition $-a_{32}-a_{12} x'\neq 0$ reduces to $a_{32}\neq 0$, as desired.
\end{proof}

\begin{example}
The classical transversality result for orthogonal linear projections  is encoded by the Lie algebra element $A=\begin{pmatrix}0&-1&0\\1&0&0\\0&0&0\end{pmatrix} = \dm_{t}\vert_{t=0}\begin{pmatrix}\cos t&-\sin t&0 \\\sin{t}&\cos t&0\\0&0&1\end{pmatrix}$. Transversality and projection theorems hold away from the line at infinity, where they both fail.
\end{example}

\begin{example}
Curiously, for the purpose of projection theorems the rotations are equivalent to the group generated by the matrix $A=\begin{pmatrix}0&-1&0\\0&0&0\\0&0&0\end{pmatrix}$, which gives the group of shears $\gamma_t(x,y)=(x+yt, y)$. Again, we have transversality away from the line at infinity.
\end{example}

\begin{example}
Transversality and projection theorems fail globally for all groups that preserve the point $\infty_Y$, since they commute with the projection. These include translations, dilations, and vertical shears. They also include
$Z$-shears of the form $(x,y)\mapsto (\frac{x}{1+xt}, \frac{y}{1+xt})$ and $Z$-rotations of the form $(x,y)\mapsto ( \frac{\cos(t) x-\sin(t)}{\sin(t)x+\cos(t)}, \frac{y}{\sin(t)x+\cos(t)})$.
\end{example}

\begin{example}
\label{ex:point-source}
The family of point-source projections from a finite light source (say, $(0,1)$) can likewise be encoded in our framework by conjugating the standard rotation family $O(2)$ by a mapping that sends the light source to $\infty_Y$, say $N=\begin{pmatrix}1&0&0\\0&0&-1\\0&1&0\end{pmatrix}$. \\ Conjugating the standard rotations by $N$ and differentiating gives the Lie algebra generator $\begin{pmatrix}0&0&-1\\0&0&0\\1&0&0\end{pmatrix}$. One concludes that point-source projections satisfy projection theorems but have artifact non-transversality along the line tangent to the unit circle at rotation angle $\theta=0$, and corresponding points at other values of $\theta$.
\end{example}

\begin{example}
Lastly, we consider the group shown in Figure~\ref{fig:infiniteTangent}, where transversality fails along the linearization of the orbit $\Gamma(\infty_Y)$. The group in this case is conjugate by an element of $GL(3,\R)$ to motions of the form $(x,y)\mapsto (e^{2t}x, e^{3t}y)$, which has non-linear orbits. Conjugating such that one of these orbits passes through $\infty$ provides a desired example.
\end{example}

\section{Spherical and hyperbolic projections}
\label{sec:hypsph}

We now provide transversality results for closest-point projections onto totally-geodesic subspaces in the  hyperbolic space $\Hyp^n$ and sphere $\Sph^n$. 
Previously, transversality for those projections was known for $\Hyp^2$ and $\Sph^2$ \cite{BaloghIseli2016}, and projection theorems \cite{BaloghIseli2018} as well as a weaker version of transversality \cite{AnninaPhD} were known in $\Hyp^n$ for $n\geq 3$ but not for $\Sph^n$.
Our results in $\Sph^n$ are new for $n\geq 2$, and in $\Hyp^n$ we provide a stronger transversality result (higher regularity). \\

We first frame the transversality statements on $\Hyp^n$ and $\Sph^n$ by viewing them as abstract Riemannian manifolds. We will then work with concrete models of $\Hyp^n$ and $\Sph^n$, as is permitted by Lemma \ref{lemma:transversalitycomposition}. Let $X$ be $\Hyp^n$ or $\Sph^n$, and fix a point $p\in X$. The exponential mapping $\exp_p: T_pX\rightarrow X$ maps each $m$-dimensional subspace of $T_pX$ to a totally-geodesic subspace of $X$. Hence, $\exp_p$ induces an identification between the set of $m$-dimensional subspaces of $T_pX$ (that is, the Grassmannian $G(n,m)$) and the set of totally-geodesic subspaces of $X$ containing $p$. By further identifying each element of $G(n,m)$ with $\R^m$ (see \S \ref{sec:prelims}), we may then speak of the family of closest-point projections  $\Pi: G(n,m)\times X \rightarrow \R^m$. Note that closest-point projections are defined globally in $\Hyp^n$. In $\Sph^n$, they are defined away from two points (depending on the element of $G(n,m)$) in the equator $S_0$ dual to $p$.

\begin{theorem}
\label{thm:curvedTransversality}
Let $X$ be hyperbolic space $\Hyp^n$ or the complement $\Sph^n\setminus S_0$ of the equator $S_0$ in the sphere $\Sph^n$, and $m<n$. Then the closest-point projection family $\Pi: G(n,m)\times X \rightarrow \R^m$ is locally transversal and therefore satisfies projection theorems. \end{theorem}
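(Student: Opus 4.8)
The plan is to realize both $\Hyp^n$ and $\Sph^n\setminus S_0$ in a projective model centered at $p$ in which geodesics become Euclidean straight lines, and then to show that in these coordinates the closest-point projection onto a totally geodesic subspace through $p$ is \emph{literally} the Euclidean orthogonal projection onto the corresponding linear subspace, up to a fixed radial reparametrization of the target. Local transversality then follows from the classical transversality of the Euclidean orthogonal projection family together with Lemma \ref{lemma:transversalitycomposition}, and the projection theorems follow from Theorem \ref{thm:PS_n}. So the whole argument is a change-of-coordinates reduction, exactly as advertised in the introduction.

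Concretely, for $\Hyp^n$ I would use the Beltrami--Klein model, realizing $X$ as the open unit ball $B^n\subset\R^n$ with $p=0$; for $\Sph^n\setminus S_0$ I would use the gnomonic (central) projection from the tangent plane at $p$, which maps the open hemisphere $\Sph^n\setminus S_0$ diffeomorphically onto $\R^n$ with $p\mapsto 0$. In both models geodesics through $p$ are Euclidean lines through the origin, so a totally geodesic $m$-dimensional subspace through $p$ --- i.e.\ an element $V\in G(n,m)$ in the sense of \S\ref{subsec:LieGrass} --- is exactly a linear $m$-subspace of $\R^n$, and the parameter space matches that of the Euclidean family on the nose, so that the reparametrization $f$ of Lemma \ref{lemma:transversalitycomposition} may be taken to be the identity.

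The key geometric step is to identify the projection itself. I would argue that Euclidean rotations of the ambient $\R^n$ fixing $p=0$ are isometries of the model (the metric is rotationally symmetric about $p$), so in particular the Euclidean reflection $\sigma_V=\operatorname{diag}(I_m,-I_{n-m})$ across a central subspace $V$ is the isometric geodesic reflection across the totally geodesic subspace corresponding to $V$. Given $q$, decompose $q=q_V+q_\perp$ along $V\oplus V^\perp$. The geodesic through $q$ and $\sigma_V(q)=q_V-q_\perp$ is the Euclidean chord joining them; it is perpendicular to the fixed set $V$, and $\sigma_V$ exchanges its endpoints, so it meets $V$ at its $\sigma_V$-fixed midpoint, which is the Euclidean point $q_V$. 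Hence the closest point of $q$ on the subspace is exactly $q_V$, and the projection in these coordinates is the Euclidean orthogonal projection onto $V$. Identifying the geodesic subspace with $\R^m$ by $\exp_p$ amounts to post-composing with geodesic normal coordinates on $V\cap B^n$ (resp.\ $V\cap\R^n$); since $V$ carries the induced metric of $\Hyp^m$ (resp.\ of the hemisphere of $\Sph^m$) with the same radial structure for every $V$, this is a single $\lambda$-independent radial diffeomorphism $h$, namely $u\mapsto \frac{\operatorname{artanh}|u|}{|u|}\,u$ in the hyperbolic case and $u\mapsto \frac{\arctan|u|}{|u|}\,u$ in the spherical case (both smooth, being even functions of $|u|$).

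This exhibits the family as $\Pi(\lambda,q)=h\bigl(\pi^{\mathrm{Eucl}}(\lambda,q)\bigr)$, where $\pi^{\mathrm{Eucl}}$ is the Euclidean orthogonal projection family (restricted to $B^n$, resp.\ transported by the gnomonic chart $g$) and $h$ is a fixed $C^\infty$ diffeomorphism onto its image; since $\pi^{\mathrm{Eucl}}$ is classically transversal for all $m<n$, Lemma \ref{lemma:transversalitycomposition} gives local transversality of $\Pi$ and Theorem \ref{thm:PS_n} gives the projection theorems. I expect the main obstacle to be the clean verification of the geometric reduction in the spherical case: one must confirm that gnomonic projection sends great circles to lines and sends geodesic reflections across great subspheres through $p$ to the corresponding Euclidean reflections, and that on $\Sph^n\setminus S_0$ the closest point onto each subsphere through $p$ is unique, so that $\Pi$ is single-valued there --- the excluded equator $S_0$ being precisely where this fails. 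A secondary point requiring care is that the target reparametrization $h$ is genuinely independent of $\lambda$, which is exactly the hypothesis of Lemma \ref{lemma:transversalitycomposition} that makes the whole reduction legitimate.
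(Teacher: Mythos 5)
Your proposal follows the same route as the paper's proof: pass to the projective models (Beltrami--Klein for $\Hyp^n$, gnomonic projection for $\Sph^n\setminus S_0$), show that in these coordinates the closest-point projection onto a totally geodesic subspace through $p$ becomes Euclidean orthogonal projection onto the corresponding linear subspace, and conclude via classical Euclidean transversality, Lemma \ref{lemma:transversalitycomposition}, and Theorem \ref{thm:PS_n}. The difference lies in how the identity ``closest-point $=$ orthogonal'' is verified. The paper argues case by case: for the sphere it works in the ambient $\R^{n+1}$ (the nearest point of $\Sph^n\cap V$ is the renormalized orthogonal projection onto $V$) and uses dilation-invariance of the gnomonic map; for $\Hyp^n$ it normalizes the subspace and the point by rotations and hyperbolic transformations so that the relevant geodesic passes through the origin. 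Your reflection argument --- $\sigma_V=\operatorname{diag}(I_m,-I_{n-m})$ is an isometry of either model, and the chord from $q$ to $\sigma_V(q)$ is a geodesic meeting the fixed set perpendicularly at its fixed midpoint $q_V$ --- handles both geometries uniformly, which is a genuine simplification; your explicit treatment of the target reparametrization $h$ and of its independence of $\lambda$ is also more careful than the paper's.

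However, one step of the reflection argument is incomplete, and it is only partially covered by the caveat you raised. Perpendicularity makes $q_V$ a \emph{critical point} of the distance function restricted to the subspace, not automatically its minimizer, so ``hence the closest point of $q$ on the subspace is exactly $q_V$'' needs justification. In $\Hyp^n$ this follows from convexity: $d(q,\cdot)^2$ is strictly convex and $V\cap\Hyp^n$ is convex, so the perpendicular foot is the unique minimum. On the sphere, the distance function on the great subsphere $V\cap\Sph^n$ has exactly two critical points, your foot $x$ and its antipode $-x$, with $d(q,x)+d(q,-x)=\pi$, and you must rule out that the reflection construction found the far one. This is immediate from your own setup: the geodesic segment from $q$ to $\sigma_V(q)$ lies in the open hemisphere, hence is an arc of length $<\pi$, so $d(q,x)<\pi/2<d(q,-x)$. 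Adding these two observations closes the argument (and the spherical one also gives single-valuedness of $\Pi$ on all of $\Sph^n\setminus S_0$, which is the uniqueness point you flagged as the main obstacle).
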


\begin{proof}
In view of Lemma \ref{lemma:transversalitycomposition}, transversality in any model of $X$ passes to all $C^2$-equivalent models, so we may work with any standard model of each space. 

We first consider the sphere $\Sph^n \subset \R^{n+1}$, see Figure \ref{fig:spherical}, with $p$ at the north pole. Recall that totally geodesic subspaces of $\Sph^n$ correspond exactly to intersections of $\Sph^n$ with linear subspaces $V$ of $\R^{n+1}$. Furthermore, given a point $q \in \Sph^n$, one can rotate both $V$ and $q$ into a normalized position to show that the projection of $q$ to $\Sph^n\cap V$ is given by perpendicular projection of $q$ to $V$ followed by rescaling to norm 1.

\begin{figure}
    \centering
    \hfill{}
    \includegraphics[width=.35\textwidth]{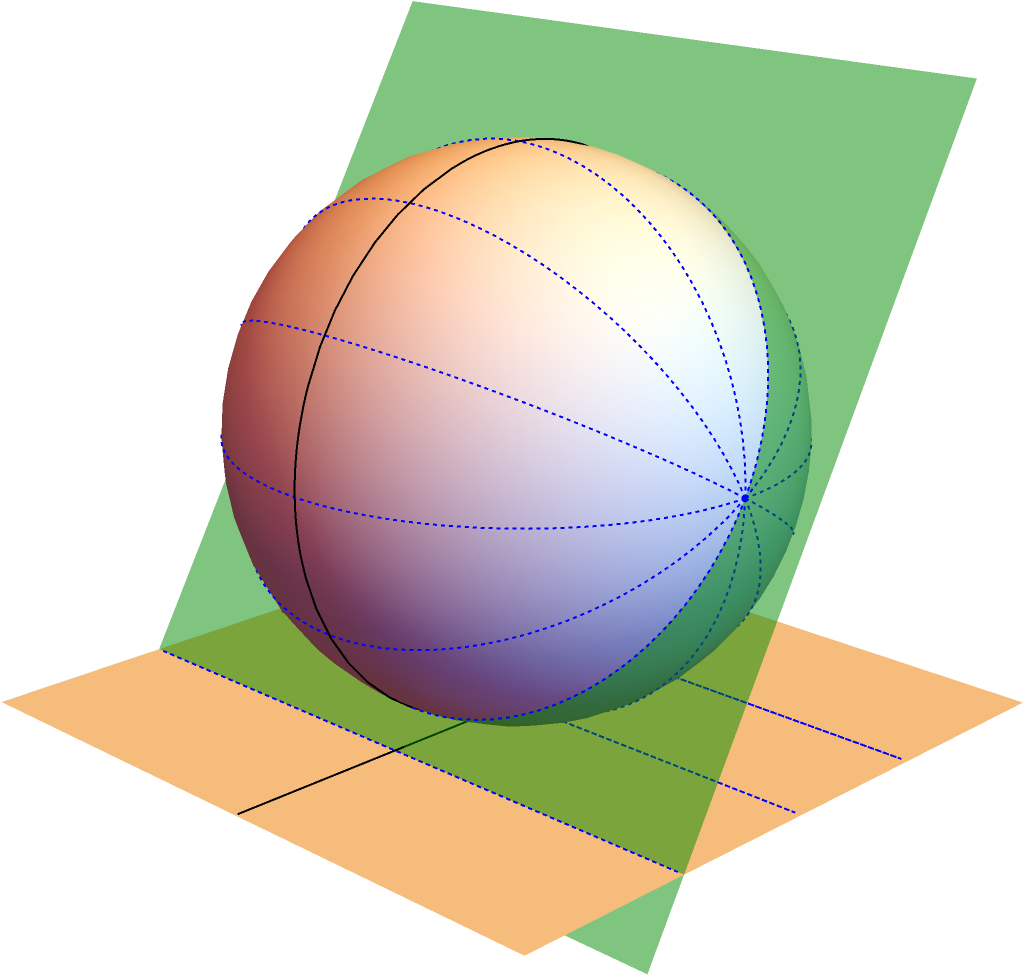}
    \hfill{}
    \includegraphics[width=.35\textwidth]{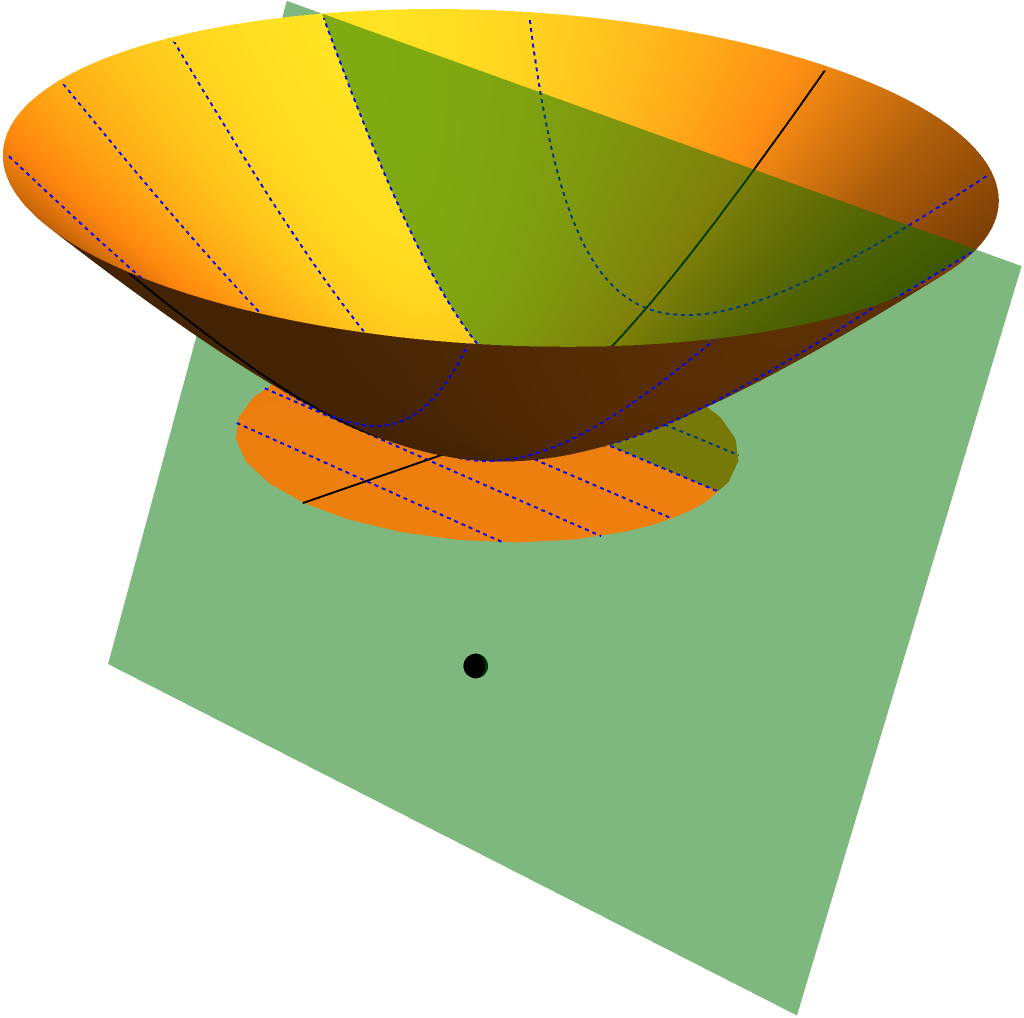}
    \hfill{}
    \caption{Closest-point projections to a totally geodesic subspace (black)  in the sphere and hyperbolic space, shown via the blue dashed fibers, and the corresponding orthogonal projections in $\R^2$. The correspondence is given by radial projection from the origin (center of the sphere on the left, black dot on the right) to the planes $z=-1$ and $z=1$, respectively.
    }
    \label{fig:spherical}
\end{figure}

Let $f(x_1, \ldots, x_{n+1})=(x_1/x_{n+1}, \ldots, x_n/x_{n+1})$ be the projectivization mapping, sending each half-sphere of $\Sph^n\setminus S_0$ diffeomorphically to $\R^n$.
The point $p$ is sent to the origin, and each totally geodesic subspace $V\cap \Sph^n$ of $\Sph^n$ is sent to a linear subspace of $\R^n$, namely to $f(V)=V/\vspan(e_{n+1})$. 
Furthermore, $f$ trivially conjugates the rotation family  $O(n)\times \id$ to the family $O(n)$ in $\R^n$. 

We now show that $f$ conjugates closest-point projections on $\Sph^n$ to orthogonal projections in $\R^n$, even though $f\vert_{\Sph^n}$ is not a conformal mapping. Note first that $f$ is dilation-invariant, so that instead of working with closest-point projections from $\Sph^n$ to $\Sph^n\cap V$, we may work with orthogonal projections from $\R^{n+1}$ to $V$. Next, rotate $V$ using an element of $O(n)\times \id$ so that it is spanned by $e_1, \ldots, e_m$ and $e_{n+1}$, and therefore $f(V)$ is the subspace $\vspan(e_1, \ldots, e_m)$. Perpendicular projection onto $V$ is then given by $(q_1, \ldots, q_{n+1})=(q_1, \ldots, q_m, 0, \ldots, 0, q_{n+1})$.  Conjugating by $f$, we obtain a well-defined mapping
\[(q_1, \ldots, q_n)\mapsto(q_1, \ldots, q_m, 0,\ldots, 0)\]
by choosing any preimage of $q$ under $f$, projecting perpendicularly to $V$, and then applying $f$ again to return to $\R^n$. 
We thus have that, under conjugation by $f$, the family of closest-point projections onto totally geodesic subspaces of dimension $m$ passing through $p$ in $\Sph^n$ is equivalent to the family of orthogonal projections to $m$-planes passing through the origin in $\R^n$. By Lemma \ref{lemma:transversalitycomposition} and transversality of the Euclidean family of projections, we obtain the spherical part of the theorem.

The claim for hyperbolic space could be performed in the same way by starting the hyperboloid model of $\Hyp^n$ and projectivizing to observe the Klein model of $\Hyp^n$.
We instead work directly in the Klein model of $\Hyp^n$ (cf.~the analogous description in \cite{BaloghIseli2018}). That is, we view $\Hyp^n$ as the unit ball in $\R^n$ with a Riemannian metric that is invariant under real linear-fractional transformations preserving the ball. We will be interested in two types of isometries: the rotations $O(n)$ and the hyperbolic transformation given by
\[(x_1,\ldots, x_n)\mapsto \left(\frac{x_1 \cosh(t)+\sinh(t)}{x_1 \sinh(t)+\cosh(t)}, \frac{x_2}{x_1 \sinh(t)+\cosh(t)}, \ldots, \frac{x_n}{x_1 \sinh(t)+\cosh(t)}\right).\]

We start by normalizing the basepoint $p$ to be the origin: starting with any $p$, we can use a rotation in $O(n)$ to normalize $p=(p_1, 0, \ldots, 0)$ and then a hyperbolic transformation to furthermore take $p=0$. It is easy to see that geodesics in $\Hyp^n$ passing through $0$ are straight lines, and therefore each $m$-dimensional totally geodesic subspace in $\Hyp^n$ passing through $0$ is simply a restriction of an $m$-dimensional linear space $W$ passing through $0$. We may therefore identify this set with the Grassmannian $G(n,m)$, with the same action of $O(n)$ in both cases.

It therefore suffices to show that nearest-point projection in $\Hyp^n$ to a subspace $W\cap \Hyp^n$ is given by orthogonal projection. As before, we may normalize $W$ using an element of $O(n)$ so that $W$ is spanned by $e_1, \ldots, e_m$. Given a point $q=(q_1, \ldots, q_n)$ we may rotate along $W$ and perpendicular to it to normalize $q=(q_1, 0, \ldots, 0, q_{m+1}, 0,\ldots, 0)$. Furthermore, we may apply a hyperbolic transformation to normalize $q=(0, \ldots, 0, q_{m+1}, 0, \ldots, 0)$. The line segment joining the origin to $q$ is a geodesic, and so $q$ projects to $0$ under closest-point projection. Note that the last  normalization of $q$ using the hyperbolic transformation does not preserve Euclidean angles, but it does preserve orthogonal projection to $W$, so the closest-point projection to $W\cap \Hyp^n$ is indeed given by orthogonal projection to $W$.

Thus, in the Klein model, we see that the family of closest-point projections to $m$-dimensional totally geodesic subspaces through the origin coincides with the family of Euclidean orthogonal projections onto $m$-dimensional subspaces,  and therefore by Lemma \ref{lemma:transversalitycomposition}  transversality passes over to hyperbolic space, as desired.
\end{proof}

\vspace{6pt}

\subsection*{Acknowledgements} The first author was supported in part by the Swiss National Science Foundation (project no 181898) as well as an AMS Simons Travel Grant.  The second author's travel was supported in part by the U.S.\   NSF grants DMS 1107452, 1107263, 1107367 ``RNMS: Geometric Structures and Representation Varieties'' (the GEAR Network).  Portions of this work were developed during visits by the authors to George Mason University, University of Bern, and University of California, Los Angeles. We thank these institutions for their hospitality. We also thank the referee for carefully reading our article and providing insightful comments.

\bibliographystyle{alpha}
\bibliography{literature_projections}

\end{document}